\newenvironment{algo}{%
  \algorithm
}{%
  \endalgorithm
}
\definecolor{darkgreen}{rgb}{0,.4,.2}
\definecolor{darkblue}{rgb}{.1,.2,.6}
\definecolor{brightblue}{rgb}{0,0.6,0.8}
\newtheorem*{rep@theorem}{\rep@title}
\newcommand{\newreptheorem}[2]{%
\newenvironment{rep#1}[1]{%
 \def\rep@title{#2 \ref{##1}}%
 \begin{rep@theorem}}%
 {\end{rep@theorem}}}
\newtheorem{definition}{Definition}
\newtheorem{theorem}[definition]{Theorem}
\newtheorem{lemma}[definition]{Lemma}
\newtheorem{remark}[definition]{Remark}
\DeclareMathOperator*{\conv}{conv}
\DeclareMathOperator{\diam}{diam}
\DeclareMathOperator*{\argmin}{\arg\min}
\DeclareMathOperator*{\argmax}{\arg\max}
\providecommand{\norm}[1]{\left\lVert#1\right\rVert}
\providecommand{\dualnorm}[1]{\norm{#1}_*}
\newcommand{\R}{\mathbb{R}}
\newcommand{\X}{\mathcal{X}}
\newcommand{\domain}{\mathcal{D}}
\newcommand{\stepsize}{\gamma}
\newcommand{\stepmax}{\stepsize_{\textrm{max}}} %
\newcommand{\stepbound}{\stepsize^\textrm{B}} %
\newcommand{\FW}{{\hspace{0.05em}\textsf{FW}}}
\newcommand{\away}{{\hspace{0.06em}\textsf{A}}}
\newcommand{\Cf}{C_{\hspace{-0.08em}f}}
\newcommand{\CfA}{C_{\hspace{-0.08em}f}^-}
\newcommand{\CfMFW}{C_{\hspace{-0.08em}f}^\away}
\newcommand{\strongConvFW}{\mu_{\hspace{-0.08em}f}^\FW}
\newcommand{\strongConvMFW}{\mu_{\hspace{-0.08em}f}^\away}
\newcommand{\distToBoundary}{{\delta_{\x^*\!,\domain}}}
\newcommand{\dirW}{\mathop{dirW}}
\newcommand{\x}{\bm{x}}
\newcommand{\y}{\bm{y}}
\newcommand{\s}{\bm{s}}
\newcommand{\dd}{\bm{d}}
\newcommand{\vv}{\bm{v}} %
\newcommand{\mapprox}{\nu} %
\newcommand{\Vertices}{\mathcal{V}}
\newcommand{\Coreset}{\mathcal{S}}
\renewcommand{\S}{\mathcal{S}}
\renewcommand{\aa}{\bm{\alpha}}
\renewcommand{\r}{\bm{r}}
\newcommand{\PdirW}{\mathop{PdirW}}
\newcommand{\innerProd}[2]{\left\langle #1 , #2 \right\rangle}
\newcommand{\C}{\mathcal{C}}
\newcommand{\proj}{\bm{P}}
\newcommand{\Kface}{\mathcal{K}}
\newcommand{\id}{\mathbf{I}} %
\newcommand{\0}{\mathbf{0}} %
\newcommand{\unit}{\mathbf{e}} %
\newcommand{\ignore}[1]{}%
\newcommand{\todo}[1]{\marginpar[\hspace*{4.5em}\textbf{TODO}\hspace*{-4.5em}]{\textbf{TODO}}\textbf{TODO:} #1}
\newcommand{\note}[1]{{\textbf{\color{red}#1}}}
\newcommand{\remove}[1]{} %
\title{An Affine Invariant Linear Convergence Analysis\\ for Frank-Wolfe Algorithms}
\author{
Simon Lacoste-Julien \\
INRIA - SIERRA project-team\\
{\'E}cole Normale Sup{\'e}rieure, Paris, France \\
\And
Martin Jaggi \\
Simons Institute for the Theory of Computing \\
UC Berkeley, USA \\
}
\begin{document}

\maketitle
\vspace{-2mm}

\begin{abstract}\vspace{-2mm}
We study the linear convergence of variants of the Frank-Wolfe algorithms for some classes of strongly convex problems, using only affine-invariant quantities. 
As in~\cite{Guelat:1986fq}, we show the linear convergence of the standard Frank-Wolfe algorithm when the solution is in the interior of the domain, but with affine invariant constants. We also show the linear convergence of the away-steps variant of the Frank-Wolfe algorithm, but with constants which only depend on the geometry of the domain, and not any property of the location of the optimal solution.
Running these algorithms does not require knowing any problem specific parameters.
\end{abstract}

The Frank-Wolfe algorithm \cite{Frank:1956vp} (also known as \emph{conditional gradient}) is one of the earliest existing methods for constrained convex optimization, and has seen an impressive revival recently due to its nice properties compared to projected or proximal gradient methods, in particular for sparse optimization and machine learning applications.

On the other hand, the classical projected gradient and proximal methods have been known to exhibit a very nice adaptive acceleration property, 
namely that the the convergence rate becomes linear for strongly convex objective, i.e. that  the optimization error of the same algorithm after $k$ iterations will decrease geometrically with $O(\rho^{-k})$ instead of the usual $O(1/k)$ for general convex objective functions.
It has become an active research topic recently whether such an acceleration is also possible for Frank-Wolfe type methods.

\vspace{-2mm}
\paragraph{Contributions.}
We show that the Frank-Wolfe algorithm with away-steps converges linearly (i.e. with a geometric rate) for any strongly convex objective function optimized over a polytope domain, with a constant bounded away from zero that only depends on the geometry of the polytope. Our convergence analysis is affine invariant (both the algorithm and the convergence rate are unaffected by an affine transformation of the variables). 
Also, our analysis does not depend on the location of the true optimum with respect to the domain, which was a disadvantage of earlier existing results such as \cite{Wolfe:1970wy,Guelat:1986fq,Beck:2004jm}, and the later results \cite{Ahipasaoglu:2008il,Kumar:2010ku,Allende:2013vw} that need Robinson's condition \cite{Robinson:1982ii}. Our analysis yields a weaker sufficient condition than Robinson's condition; in particular we can have linear convergence even in some cases when the function has more than one global minima and is not globally strongly convex.
As a second contribution, we provide an affine invariant version of the analysis of~\cite{Guelat:1986fq} showing that the classical (unmodified) Frank-Wolfe algorithm converges linearly on strongly convex functions when the optimum lies in the interior.

\vspace{-2mm}
\paragraph{Related Work.}
The away-steps variant of the Frank-Wolfe algorithm, that can also remove weight from ``bad'' ones of the currently active atoms, was proposed in \cite{Wolfe:1970wy}, and later also analyzed in \cite{Guelat:1986fq}. The precise algorithm is stated below in Algorithm \ref{alg:MFW}.
An alternative away-step algorithm (with a sublinear convergence rate) has been considered by \cite{Clarkson:2010hv}, namely performing an away step whenever the number of atoms of non-zero weight has exceeded a fixed target size. The disadvantage of this method is that it requires knowledge of the curvature constant, which is not realistic in many practical applications. %
For the classical Frank-Wolfe algorithm, the early work of \cite[Theorem 6.1]{Levitin:1966gf} has shown a linear convergence rate under the strong requirement that the objective is strongly convex, and furthermore the domain is strongly convex as a set. %
\cite{Beck:2004jm} has shown a linear rate for the special case of quadratic objectives when the optimum is in the strict interior of the domain, 
but their result was already subsumed by~\cite{Guelat:1986fq}.
More recently \cite{Ahipasaoglu:2008il,Kumar:2010ku,Allende:2013vw} have obtained linear convergence results in the case that the optimum solution satisfies Robinson's condition \cite{Robinson:1982ii}.
In a different recent line of work, \cite{Garber:2013vl,Garber:2013dl} has studied an algorithm variation\footnote{This can be interpreted as a concrete instantiation of the stronger oracle proposed in~\cite{Lan:2013um}.} that moves mass from the worst vertices to the ``towards'' vertex until a specific condition is satisfied, yielding a linear convergence rate. %
Their algorithm requires the knowledge of several constants though, and moreover is not adaptive to the best-case scenario, unlike the Frank-Wolfe algorithm with away steps and line-search. 
None of these previous works was shown to be affine invariant, and most require additional knowledge about problem specific parameters.%

\vspace{-2mm}
\section{Frank-Wolfe Algorithms, and Away-Steps}\vspace{-2mm}

\begin{wrapfigure}{r}{9.3cm}\vspace{-1.5em} %
\begin{algo}[H]
  \caption{Frank-Wolfe Algorithm with Away Steps}
  \label{alg:MFW}
\SetAlgoNoLine\DontPrintSemicolon
  Let $\x^{(0)} \in \Vertices$, and $\Coreset^{(0)} := \{\x^{(0)}\}$ \vspace{-1mm}\;
   \hspace{3cm}\emph{\small(so that $\alpha^{(0)}_{\vv} = 1$ for $\vv=\x^{(0)}$ and $0$ otherwise)} \vspace{-1mm}\;
  \For{$k=0\dots K$}{
  Let $\s_k \in \displaystyle\argmin_{\vv \in \Vertices} \textstyle\left\langle \nabla f(\x^{(k)}), \vv \right\rangle$ and $\dd_k^\FW := \s_k - \x^{(k)}$ \vspace{-2.5mm}\;
    \hspace{6.3cm}\emph{\small(the FW direction)} \;
  Let $\vv_k \in \displaystyle\argmax_{\vv \in \Coreset^{(k)} } \textstyle\left\langle \nabla f(\x^{(k)}), \vv \right\rangle$ and $\dd_k^\away := \x^{(k)} - \vv_k$ \vspace{-3mm}\;
    \hspace{6.1cm}\emph{\small(the away direction)} \;
  \eIf{$\left\langle \nabla f(\x^{(k)}), \dd_k^\FW\right\rangle  \leq \left\langle \nabla f(\x^{(k)}), \dd_k^\away\right\rangle$ }{
  	$\dd_k :=  \dd_k^\FW$, and $\stepmax := 1$  
	     \hspace{5mm}\emph{\small(choose the FW direction)} \;
  	}{
	$\dd_k :=  \dd_k^\away$, and $\stepmax := \alpha_{\vv_k} / (1- \alpha_{\vv_k})$   \;
	     \hspace{0.5cm}\emph{\small(choose the away direction, and maximum feasible step-size)} \vspace{-2mm}\;
  }	
  Line search: $\stepsize_k \in \displaystyle\argmin_{\stepsize \in [0,\stepmax]} \textstyle f\left(\x^{(k)} + \stepsize \dd_k\right)$ \;
  Update $\x^{(k+1)} := \x^{(k)} + \stepsize_k \dd_k$ \;
       \hspace{2cm}\emph{\small(and accordingly for the weights $\aa^{(k+1)}$, see text)} \;
  Update $\Coreset^{(k+1)} := \{\vv \: s.t. \: \alpha^{(k+1)}_{\vv} > 0\}$\;
  }\vspace{-1mm}%
\end{algo}
\end{wrapfigure}

We consider general constrained convex optimization problems of the form\vspace{-1mm}
\begin{equation*}\label{eq:optGenConvex}
   \min_{\x \in \domain} \, f(\x) \ .\vspace{-1mm}
\end{equation*}
We assume~$f$ is convex and differentiable, and that the domain~$\domain$ %
is a bounded convex subset of a vector space.
The Frank-Wolfe method~\cite{Frank:1956vp}, also known as \emph{conditional gradient}~\cite{Levitin:1966gf} works as follows:
At a current~$\x^{(k)}$, the algorithm considers the linearization of the objective function, and moves slightly towards a minimizer of this linear function (taken over the same domain).
In terms of convergence, it is known that the iterates of Frank-Wolfe satisfy $f(\x^{(k)}) - f(\x^*) \le O\big(1/k\big)$, for $\x^*$ being an optimal solution~\cite{Frank:1956vp,Dunn:1978di,Jaggi:2013wg}. %
One of the main reasons for the recent increased popularity of Frank-Wolfe-type algorithms is the sparsity of the iterates, i.e. that the iterate is always represented as a sparse convex combination of at most $k$ vertices $\Coreset^{(k)}\subseteq \Vertices$ of the domain~$\domain$, which we write as $\x^{(k)}= \sum_{\vv \in \Coreset^{(k)}} \alpha^{(k)}_{\vv} \vv$.
Here $\Vertices$ is defined to be the set of vertices (extreme points) of $\domain$, so that $\domain = \conv(\Vertices)$.
We assume that the \emph{linear oracle} defining $\s_k$ always returns a point from $\Vertices$ as a minimizer.

\vspace{-4mm}
\paragraph{Away-Steps.}
The away-steps variant of Frank-Wolfe, as stated in Algorithm \ref{alg:MFW}, was proposed in \cite{Wolfe:1970wy}, with the idea to also remove weight from ``bad'' ones of the currently active atoms. %
Note that the classical Frank-Wolfe algorithm is obtained by only using the FW direction in Algorithm~\ref{alg:MFW}.
If $\stepsize_k = \stepmax$, then we call this step a \emph{drop step}, as it fully removes the vertex $\vv_k$ from the currently active set of atoms  $\Coreset^{(k)}$.
The updates of the algorithm are of the following form:
For a FW step, we have $\Coreset^{(k+1)} = \{\s_k\}$ if $\stepsize_k = 1$; otherwise $\Coreset^{(k+1)} = \Coreset^{(k)} \cup \{\s_k\}$. Also, we have $\alpha^{(k+1)}_{\s_k} := (1-\stepsize_k) \alpha^{(k)}_{\s_k} + \stepsize_k$ and $\alpha^{(k+1)}_{\vv} := (1-\stepsize_k) \alpha^{(k)}_{\vv}$ for $\vv \in \Coreset^{(k)} \setminus  \{\s_k\}$. 
For an away step, we have $\Coreset^{(k+1)} = \Coreset^{(k)} \setminus \{\vv_k\}$ if  $\stepsize_k = \stepmax$ (a \emph{drop step}); 
otherwise $\Coreset^{(k+1)} = \Coreset^{(k)}$.  Also, we have $\alpha^{(k+1)}_{\vv_k} := (1+\stepsize_k) \alpha^{(k)}_{\vv_k} - \stepsize_k$ and $\alpha^{(k+1)}_{\vv} := (1+\stepsize_k) \alpha^{(k)}_{\vv}$ for $\vv \in \Coreset^{(k)} \setminus  \{\vv_k\}$.

\vspace{-2mm}
\section{Affine Invariant Measures of Smoothness and Strong Convexity}\vspace{-2mm}
\paragraph{Affine Invariance.}
An optimization method is called \emph{affine invariant} if it is invariant under affine transformations of the input problem: If one chooses any re-parameterization of the domain~$\domain$, by a \emph{surjective} linear or affine map $M:\hat\domain\rightarrow\domain$, then the ``old'' and ``new'' optimization problems $\min_{\x\in\domain}f(\x)$ and $\min_{\hat\x\in\hat\domain}\hat f(\hat\x)$ for $\hat f(\hat\x):=f(M\hat\x)$ look completely the same to the algorithm.
More precisely, every ``new'' iterate must remain exactly the transform of the corresponding old iterate; an affine invariant analysis should thus yield the convergence rate and constants unchanged by the transformation. It is well known that Newton's method is affine invariant under invertible $M$, and the Frank-Wolfe algorithm is affine invariant in the even stronger sense under arbitrary $M$ \cite{Jaggi:2013wg}. (This is directly implied if the algorithm and all constants appearing in the analysis only depend on inner products with the gradient, which are preserved since $\nabla \hat f = M^T\nabla f$.)
\vspace{-2mm}
\paragraph{Affine Invariant Measures of Smoothness.}
The affine invariant convergence analysis of the standard Frank-Wolfe algorithm by \cite{Jaggi:2013wg} crucially relies on the following measure of non-linearity of the objective function $f$ over the domain $\domain$. The \emph{curvature constant} $C_{f}$ of a convex and differentiable function $f:\R^n\rightarrow\R$, with respect to a compact domain $\domain$ is defined as\vspace{-1mm}
\begin{equation}\label{eq:Cf}
  \Cf := \sup_{\substack{\x,\s\in \domain,  ~\stepsize\in[0,1],\\
                      \y = \x+\stepsize(\s-\x)}} \textstyle
           \frac{2}{\stepsize^2}\big( f(\y)-f(\x)-\innerProd{\nabla f(\x)}{\y-\x}\big) \ .\vspace{-4mm}
\end{equation}

The assumption of bounded curvature $\Cf$ closely corresponds to a Lipschitz assumption on the gradient of~$f$. %
More precisely, if $\nabla f$ is $L$-Lipschitz continuous on $\domain$ with respect to some arbitrary chosen norm $\norm{.}$, then\vspace{-2mm}
\begin{equation} \label{eq:CfBound}
\Cf \le \diam_{\norm{.}}(\domain)^2 L \ ,\vspace{-0.5mm}
\end{equation}
where $\diam_{\norm{.}}(.)$ denotes the $\norm{.}$-diameter, see \cite[Lemma 7]{Jaggi:2013wg}.
While the early papers \cite{Frank:1956vp,Dunn:1979da} on the Frank-Wolfe algorithm relied on such Lipschitz constants with respect to a norm, 
the curvature constant $\Cf$ here is affine invariant, does not depend on any norm, and gives tighter convergence rates. 
$\Cf$ combines the complexity of $\domain$ and the curvature of $f$ into a single quantity.

\vspace{-2mm}
\paragraph{An Affine Invariant Notion of Strong Convexity.}
Inspired by the affine invariant curvature measure, one can also define a related affine invariant measure of strong convexity, when combined with the assumption of the optimum $\x^*$ being in the strict interior of $\domain$:\vspace{-2mm}
\begin{equation}\label{eq:mufFW}
  \strongConvFW := \inf_{\substack{\x\in \domain\setminus\{\x^*\}, ~\stepsize\in(0,1],\\
                      \overline\s = \overline\s(\x, \x^*,\domain),\\%
                      \y = \x+\stepsize(\overline\s-\x)}} \textstyle
           \frac{2}{\stepsize^2}\big( f(\y)-f(\x)-\innerProd{\nabla f(\x)}{\y-\x} \big) \ .\vspace{-1mm}
\end{equation}
Here the point $\overline\s$ is defined to be the point where the ray from $\x$ to the optimum $\x^*$ pinches the boundary of the set $\domain$, i.e. furthest away from $\x$ while still in $\domain$, %
$\overline\s(\x, \x^*,\domain) := \textrm{ray}(\x, \x^*) \cap \partial \domain$.
 We will later show that this strict interior assumption, which can be very prohibitive, can be removed for the Frank-Wolfe algorithm with \emph{away steps}, as we explain in Section~\ref{sec:convMFV}.
Clearly, the quantity $\strongConvFW$ is affine invariant, as it only depends on the inner products of feasible points with the gradient. %

\begin{remark}
For all pairs of functions $f$ and bounded sets $\domain$, it holds that $\strongConvFW \le \Cf$.
\end{remark}\vspace{-2mm}

The following simple lemma gives an interpretation of the very abstract (affine invariant) quantity defined above, in terms of classical norms and strong-convexity properties.

\begin{lemma}\label{lem:muFWinterpretation}
Let $f$ be a convex differentiable function and suppose $f$ is \emph{strongly convex} w.r.t. some arbitrary norm $\norm{.}$ over the domain $\domain$ with strong-convexity constant $\mu>0$.
Furthermore, suppose that the (unique) optimum $x^*$ lies in the relative interior of $\domain$, i.e. $\distToBoundary := \inf_{\s\in\partial \domain} \norm{\s-\x^*} > 0$.
Then\vspace{-3mm}
\[
\strongConvFW \geq \mu \cdot \distToBoundary^2  \ .
\] %
\end{lemma}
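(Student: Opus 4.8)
The plan is to lower-bound each of the two factors appearing in the definition~\eqref{eq:mufFW} of $\strongConvFW$ separately: the second-order Taylor residual $f(\y)-f(\x)-\innerProd{\nabla f(\x)}{\y-\x}$ from below by strong convexity, and the ratio $\norm{\overline\s-\x}^2/\norm{\x^*-\x}^2$ from below by using that $\x^*$ is deep inside $\domain$. First I would recall the standard consequence of $\mu$-strong convexity w.r.t.\ $\norm{.}$: for any $\x,\y\in\domain$,
\[
f(\y)-f(\x)-\innerProd{\nabla f(\x)}{\y-\x} \ \geq\ \frac{\mu}{2}\norm{\y-\x}^2 .
\]
Fix a feasible $\x\in\domain\setminus\{\x^*\}$ and $\stepsize\in(0,1]$, set $\overline\s := \overline\s(\x,\x^*,\domain)$ and $\y := \x+\stepsize(\overline\s-\x)$. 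Since $\y-\x = \stepsize(\overline\s-\x)$, the displayed inequality gives $\frac{2}{\stepsize^2}\big(f(\y)-f(\x)-\innerProd{\nabla f(\x)}{\y-\x}\big) \geq \mu\,\norm{\overline\s-\x}^2$, and crucially the $\stepsize$-dependence has cancelled, so it remains to show $\norm{\overline\s-\x}^2 \geq \distToBoundary^2$ for every $\x\in\domain$.

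The key geometric step is therefore: for any $\x\in\domain$, the point $\overline\s$ where the ray from $\x$ through $\x^*$ exits $\domain$ satisfies $\norm{\overline\s-\x}\geq\distToBoundary$. Here is the argument I would give. Write $\overline\s = \x + t(\x^*-\x)$ with $t\geq 1$ (it is $\geq 1$ because $\x^*\in\domain$ lies on the segment from $\x$ to the boundary point $\overline\s$, so $\overline\s$ is at least as far as $\x^*$ along the ray). Then
\[
\norm{\overline\s-\x} \ =\ t\,\norm{\x^*-\x} \ =\ \norm{\x^*-\x} + (t-1)\norm{\x^*-\x} \ =\ \norm{\x^*-\x} + \norm{\overline\s-\x^*},
\]
using collinearity of $\x,\x^*,\overline\s$ and the nonnegativity of $t-1$. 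Since $\overline\s\in\partial\domain$, the definition of $\distToBoundary$ gives $\norm{\overline\s-\x^*}\geq\distToBoundary$, hence $\norm{\overline\s-\x}\geq\distToBoundary$ (the term $\norm{\x^*-\x}\geq 0$ only helps). Combining with the previous paragraph yields $\frac{2}{\stepsize^2}\big(f(\y)-f(\x)-\innerProd{\nabla f(\x)}{\y-\x}\big)\geq\mu\distToBoundary^2$ for every admissible triple $(\x,\stepsize,\y)$ in~\eqref{eq:mufFW}, and taking the infimum gives $\strongConvFW\geq\mu\distToBoundary^2$.

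The only real subtlety — and the step I expect to need the most care — is the geometric claim that $\overline\s$ lies no closer to $\x^*$ than $\distToBoundary$, together with the implicit fact that $\overline\s$ is well-defined and that $t\geq 1$. This rests on $\x^*$ being in the relative interior of $\domain$ and $\domain$ being bounded and convex, so the ray from $\x$ through $\x^*$ does hit $\partial\domain$ in a unique point, and that point is reached at or beyond $\x^*$. If one wants to be fully careful about the "relative" in "relative interior," one should interpret $\partial\domain$ and the norm distance within the affine hull of $\domain$; the argument above is unchanged. Everything else is a routine application of strong convexity, and, as noted, the potentially worrisome dependence on the step-size $\stepsize$ disappears automatically because $\y-\x$ scales linearly in $\stepsize$.
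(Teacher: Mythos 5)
Your proposal is correct and takes essentially the same route as the paper's proof: the same strong-convexity lower bound, the same cancellation of the step-size via $\norm{\y-\x}=\stepsize\norm{\overline\s-\x}$, and the same geometric observation $\norm{\overline\s-\x}\geq\norm{\overline\s-\x^*}\geq\distToBoundary$, which you merely spell out in more detail through the collinearity of $\x$, $\x^*$, $\overline\s$.
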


\vspace{-3mm}
\section{Linear Convergence of Frank-Wolfe}\vspace{-2mm}%
We obtain an affine invariant linear convergence proof for the standard FW algorithm when $f$ is strongly convex and the solution $\x^*$ lies in the relative interior of $\domain$ (an improvement over~\cite{Guelat:1986fq}). %

\begin{theorem}\label{thm:linearConvergenceFW}
Suppose that $f$ has smoothness constant $\Cf$ as defined in~(\ref{eq:Cf}),
as well as ``interior'' strong convexity constant~$\strongConvFW$ as defined in~(\ref{eq:mufFW}).
Then the error of the iterates of the Frank-Wolfe algorithm with step-size $\stepsize := \min\{1, \frac{g_k}{\Cf} \}$
(or using line-search) decreases geometrically, that is\vspace{-2mm}
\[
h_{k+1} \leq \left(1-\rho_f^\FW\right) h_k \ , \vspace{-3mm}
\]
where $\rho_f^\FW := \min \{\frac{1}{2}, \frac{\strongConvFW}{\Cf} \}$. Here in each iteration, $h_k := f(\x^{(k)}) - f(\x^*)$ denotes the primal error, and $g_k :=  g(\x^{(k)}) := \displaystyle\max_{\s \in \domain} \,\big\langle \nabla f(\x^{(k)}), \x^{(k)} - \s  \big\rangle$ is the duality gap as defined by \cite{Jaggi:2013wg}.%
\end{theorem}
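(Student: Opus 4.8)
The plan is to combine the standard affine-invariant Frank-Wolfe progress bound (which comes from $\Cf$) with a new lower bound on the duality gap $g_k$ in terms of the primal error $h_k$ (which comes from $\strongConvFW$). Throughout, recall that for the standard FW algorithm we always have $\dd_k = \s_k - \x^{(k)}$ and $\stepmax = 1$, so the line-search is over $\stepsize\in[0,1]$, and we may assume $\x^{(k)}\ne\x^*$ (else $h_k = 0$ and there is nothing to prove).

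First I would record the usual per-step descent inequality. Applying the definition~(\ref{eq:Cf}) of $\Cf$ with $\x = \x^{(k)}$, the FW vertex $\s = \s_k$, and $\y = \x^{(k)} + \stepsize(\s_k - \x^{(k)})$, together with $\innerProd{\nabla f(\x^{(k)})}{\s_k - \x^{(k)}} = -g_k$, gives $h_{k+1} \le h_k - \stepsize g_k + \tfrac{\stepsize^2}{2}\Cf$ for every $\stepsize\in[0,1]$ (line-search only improves on any fixed $\stepsize$). Choosing $\stepsize = \min\{1, g_k/\Cf\}$ and splitting into two cases yields $h_{k+1} \le h_k - \tfrac{g_k^2}{2\Cf}$ when $g_k \le \Cf$, and, using $\tfrac{\Cf}{2} \le \tfrac{g_k}{2}$, $h_{k+1} \le h_k - \tfrac{g_k}{2}$ when $g_k > \Cf$.

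The heart of the argument is the lower bound on $g_k$. Let $\overline\s := \overline\s(\x^{(k)}, \x^*, \domain) \in \partial\domain$ and write $\x^* = \x^{(k)} + \gamma_0(\overline\s - \x^{(k)})$ with $\gamma_0\in(0,1]$; this is legitimate since $\x^*\in\domain$ lies on the segment from $\x^{(k)}$ to the boundary point $\overline\s$, and $\gamma_0>0$ because $\x^{(k)}\ne\x^*$. Applying the definition~(\ref{eq:mufFW}) of $\strongConvFW$ with $\x = \x^{(k)}$, step $\stepsize = \gamma_0$, and $\y = \x^*$ gives $f(\x^*) \ge f(\x^{(k)}) + \gamma_0\innerProd{\nabla f(\x^{(k)})}{\overline\s - \x^{(k)}} + \tfrac{\strongConvFW}{2}\gamma_0^2$. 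Since $\overline\s\in\domain$, the definition of the gap gives $g_k \ge \innerProd{\nabla f(\x^{(k)})}{\x^{(k)} - \overline\s}$; rearranging the inequality above and dividing by $\gamma_0$ then gives $g_k \ge \tfrac{h_k}{\gamma_0} + \tfrac{\strongConvFW}{2}\gamma_0 \ge \sqrt{2\,\strongConvFW h_k}$ by the AM--GM inequality (minimizing the middle expression over $\gamma_0>0$). I would also record the elementary weak-duality bound $g_k \ge h_k$, which follows from convexity of $f$ and $\x^*\in\domain$.

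Finally I would assemble the pieces: when $g_k \le \Cf$, inserting $g_k^2 \ge 2\strongConvFW h_k$ into the first progress bound gives $h_{k+1} \le (1 - \strongConvFW/\Cf)h_k$; when $g_k > \Cf$, inserting $g_k \ge h_k$ into the second gives $h_{k+1} \le (1 - \tfrac12)h_k$. In both cases $h_{k+1}\le(1 - \rho_f^\FW)h_k$ with $\rho_f^\FW = \min\{\tfrac12, \strongConvFW/\Cf\}$. I expect the only delicate step to be the duality-gap lower bound: the correct ``towards'' point to test against is $\overline\s$ (the boundary point, not a vertex of $\domain$), chosen precisely so that the optimum $\x^*$ itself lands on the segment $[\x^{(k)},\overline\s]$ -- this is exactly what makes the affine-invariant quantity $\strongConvFW$ directly applicable -- after which the AM--GM estimate and the case analysis are routine Frank-Wolfe bookkeeping.
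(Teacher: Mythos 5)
Your proposal is correct and takes essentially the same route as the paper's proof: both apply the $\strongConvFW$ bound at the step-size for which $\y=\x^*$ to obtain $h_k \le \frac{g_k^2}{2\strongConvFW}$ (your AM--GM rearrangement is algebraically the paper's completing-the-square), then combine it with the curvature descent bound $h_{k+1}\le h_k-\stepsize g_k+\frac{\stepsize^2}{2}\Cf$, handling the $g_k>\Cf$ border case via $h_k\le g_k$ exactly as the paper does. No gaps.
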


\vspace{-2mm}
\section{Linear Convergence of Frank-Wolfe with Away-Steps}\vspace{-2mm}%
\label{sec:convMFV}

We now show the linear convergence of FW with away-steps under strong convexity, without any assumption on the location of the optimum with respect to the domain.
However, our convergence rate will depend on a purely geometric complexity constant of the domain $\domain$, as we show below.

\vspace{-2mm}
\paragraph{An Affine Invariant Notion of Strong Convexity which Depends on the Geometry of $\domain$.}

The trick is to use anchor points in the domain in order to define standard lengths (by looking at proportions on lines). These anchor points ($\s_f(\x)$ and $\vv_f(\x)$ defined below) are motivated directly from the away-steps algorithm.\\
Let $\s_f(\x) :=$%
$~\argmin_{\vv \in \Vertices} \left\langle \nabla f(\x), \vv \right\rangle$ (the standard Frank-Wolfe direction).
To define the away-vertex, we consider all possible expansions of $\x$ as a convex combination of vertices.
Let $\S_{\x} := \{ \S \, | \, \S \subseteq \Vertices$ such that $\x$ is a proper\footnote{By \emph{proper} convex combination, we mean that all coefficients are non-zero in the convex combination.} convex combination of all the elements in $\S\}$.
For a given set~$\S$, we write $\vv_{\S}(\x) := \argmax_{\vv \in \S } \left\langle \nabla f(\x), \vv \right\rangle$ for the away vertex in the algorithm supposing that the current set of active vertices was $\S$.
Finally, we define $\vv_f(\x) := \displaystyle\argmin_{\{\vv = \vv_{\S}(\x) \,|\, \S \in \S_{\x} \}} \textstyle\left\langle \nabla f(\x), \vv \right\rangle$ to be the worst-case away vertex (that is, the vertex which would yield the smallest away descent).

We can now define the strong convexity constant $\strongConvMFW$ which depends \emph{both} on the function~$f$ and the domain~$\domain$:\vspace{-3mm}
\begin{equation}\label{eq:muf}
  \strongConvMFW := \inf_{\x\in \domain} \inf_{\substack{\x^* \in \domain\\
                        \textrm{s.t. } \left\langle \nabla f(\x), \x^*-\x \right\rangle < 0 }}
           \frac{2}{{\stepsize^\away(\x,\x^*)}^2}
           \big( f(\x^*)-f(\x)-\left\langle \nabla f(\x),  \x^*-\x \right\rangle \big) \ .
\end{equation}\vspace{-4mm}

Here the positive quantity $\stepsize^\away(\x,\x^*) := \frac{\innerProd{\nabla f(\x)}{\x^*-\x} }{\innerProd{\nabla f(\x)}{\s_f(\x) - \vv_f(\x)} }$ plays the role of $\stepsize$ in definition~\eqref{eq:Cf}.

\vspace{-3mm}
\paragraph{Interpretation.}
The above complexity definition is already sufficient for us to prove the linear convergence. 
Additionally, the constant can be understood in terms of the geometry of $\domain$, as follows:

\vspace{-3mm}
\paragraph{Directional Width.}
The directional width of a set $\domain$ with respect to a direction $\dd$ (and underlying inner product norm $\norm{\cdot}$) is defined as $\dirW(\domain,\dd) := \max_{\x\in\domain} \big\langle \frac{\dd}{\dualnorm{\dd}}, \x \big\rangle-\min_{\x \in\domain} \big\langle \frac{\dd}{\dualnorm{\dd}}, \x \big\rangle$.\vspace{-2mm}

\vspace{-2mm}
\paragraph{Pyramidal Width.}
We define the pyramidal directional width of a set $\domain$ with respect to a direction $\dd$ and a base point $\x \in \domain$ to be
$
\PdirW(\domain,\dd, \x) := \min_{\S \in \S_{\x}} \dirW( \S \cup \{\s(\domain, \dd) \} , \; \dd)
$
where $\s(\domain,\dd) := \argmax_{\vv \in \domain}  \left\langle \dd, \vv \right\rangle$.
To define the pyramidal width of a set, we take the infimum over a set of possible feasible directions~$\dd$ (in order to avoid the problem of zero width).
A direction~$\dd$ is \emph{feasible} for $\domain$ from $\x$ if it points inwards the set, 
(i.e. $\dd \in \text{cone}(\domain-\x)$). 

We define the \emph{pyramidal width} of a set $\domain$ to be the smallest pyramidal width of all its faces, i.e.\vspace{-1mm}
\begin{equation} \label{eq:PdirW}
\PdirW(\domain) := \displaystyle \inf_{\substack{\Kface \in \textrm{faces}(\domain) \\
												  \x \in \Kface \\
												  \dd \in \text{cone}(\Kface-\x) \setminus \{\0\}} 
                                   } \PdirW(\Kface,\dd, \x) .    \vspace{-1mm}                             
\end{equation}

\begin{remark}
Any curved domain will yield a pyramidal width of zero, because then the set of active atoms $\S_{\x}$ can contain vertices arbitrary close to the boundary forming a very narrow pyramid. The pyramidal width quantity is thus only useful on polytopes (the convex hull of a finite set of points). %
\end{remark}

\begin{remark} \label{thm:PdirWproperty}
Let $ \vv(\x, \dd) :=$ the vertex which achieves the minimum in $\min_{\S \in \S_{\x}} \max_{\vv \in \S } \innerProd{\dd}{-\vv}$ for a polytope $\Kface$ and $\x \in \Kface$. Then we have
$
	\PdirW(\Kface,\dd, \x) = \big\langle \frac{\dd}{\dualnorm{\dd}} , \s(\Kface,\dd) - \vv(\x, \dd) \big\rangle.
$\vspace{-3mm}
\end{remark}
We conjecture that $\PdirW(\domain)$ for $\domain$ being the unit simplex in $\R^d$ is $\frac2{\sqrt{d}}$.

\begin{lemma} \label{lem:muFdirWinterpretation}
Let $f$ be a convex differentiable function and suppose that $f$ is $\mu$-\emph{strongly convex} w.r.t. some \emph{inner product norm} $\norm{\cdot}$ over the domain $\domain$ with strong-convexity constant $\mu \geq 0$. Then\vspace{-1mm}
\[
\strongConvMFW \geq \mu \cdot \left( \PdirW(\domain) \right)^2 \ .
\] 
\end{lemma}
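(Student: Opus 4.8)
The plan is to lower-bound the abstract quantity $\strongConvMFW$ defined in \eqref{eq:muf} by translating each of its ingredients into the norm $\norm{\cdot}$. Fix a feasible pair $(\x,\x^*)$ with $\innerProd{\nabla f(\x)}{\x^*-\x} < 0$, and write $\dd := \x^* - \x$; note $\dd$ is a feasible direction for $\domain$ from $\x$, and in fact lies in $\text{cone}(\Kface-\x)$ for the minimal face $\Kface$ of $\domain$ containing $\x$. First I would use $\mu$-strong convexity of $f$ to bound the numerator in \eqref{eq:muf} from below: $f(\x^*)-f(\x)-\innerProd{\nabla f(\x)}{\x^*-\x} \geq \tfrac{\mu}{2}\norm{\x^*-\x}^2 = \tfrac{\mu}{2}\norm{\dd}^2$. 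So it suffices to show
\[
\frac{\norm{\dd}^2}{\stepsize^\away(\x,\x^*)^2} \;\geq\; \left(\PdirW(\domain)\right)^2 .
\]
Substituting the definition $\stepsize^\away(\x,\x^*) = \frac{\innerProd{\nabla f(\x)}{\x^*-\x}}{\innerProd{\nabla f(\x)}{\s_f(\x)-\vv_f(\x)}}$ and taking square roots, the goal becomes
\[
\norm{\dd}\cdot\frac{\innerProd{\nabla f(\x)}{\s_f(\x)-\vv_f(\x)}}{-\innerProd{\nabla f(\x)}{\dd}} \;\geq\; \PdirW(\domain).
\]

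The key step is to relate the three inner products with $\nabla f(\x)$ to the directional-width geometry using the direction $\r := \dd/\norm{\dd}$ (an inner-product norm, so duality is just this normalization, up to the self-duality $\dualnorm{\cdot}=\norm{\cdot}$). By Cauchy–Schwarz, $-\innerProd{\nabla f(\x)}{\dd} = \norm{\dd}\,\innerProd{\nabla f(\x)}{-\r} \leq \norm{\dd}\,\dualnorm{\nabla f(\x)}\cdot$(something); more precisely I want to show the ratio $\frac{\innerProd{\nabla f(\x)}{\s_f(\x)-\vv_f(\x)}}{-\innerProd{\nabla f(\x)}{\dd}}$ is at least $\frac{\PdirW(\domain)}{\norm{\dd}}$ by identifying $\s_f(\x)$ as the vertex maximizing $\innerProd{\nabla f(\x)}{\cdot}$ over $\domain$ — wait, it is the minimizer, so $-\nabla f(\x)$ is the relevant direction: $\s_f(\x) = \s(\domain, -\nabla f(\x))$. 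Similarly $\vv_f(\x)$ is exactly the worst-case away vertex $\vv(\x, -\nabla f(\x))$ from Remark \ref{thm:PdirWproperty}, so by that remark $\innerProd{\nabla f(\x)}{\s_f(\x)-\vv_f(\x)} = -\innerProd{-\nabla f(\x)}{\s_f(\x)-\vv_f(\x)} $, and $\PdirW(\Kface, -\nabla f(\x), \x) = \innerProd{\frac{-\nabla f(\x)}{\dualnorm{\nabla f(\x)}}}{\s_f(\x)-\vv_f(\x)}$. Hence $\innerProd{\nabla f(\x)}{\s_f(\x)-\vv_f(\x)} = \dualnorm{\nabla f(\x)}\cdot\PdirW(\Kface,-\nabla f(\x),\x) \geq \dualnorm{\nabla f(\x)}\cdot\PdirW(\domain)$, using that $-\nabla f(\x)$ restricted to $\Kface$ is a feasible direction witnessing the infimum \eqref{eq:PdirW} (this requires checking $-\nabla f(\x) \in \text{cone}(\Kface - \x)\setminus\{\0\}$ — it is nonzero since $\innerProd{\nabla f(\x)}{\dd}\neq 0$, and it points inward $\Kface$ because $\x$ minimizes the objective neither globally nor on $\Kface$ in the relevant sense; this is the delicate point). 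For the denominator, Cauchy–Schwarz gives $-\innerProd{\nabla f(\x)}{\dd} \leq \dualnorm{\nabla f(\x)}\norm{\dd}$. Combining, the ratio is $\geq \frac{\dualnorm{\nabla f(\x)}\PdirW(\domain)}{\dualnorm{\nabla f(\x)}\norm{\dd}} = \frac{\PdirW(\domain)}{\norm{\dd}}$, which is exactly what we needed.

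Putting the pieces together: $\norm{\dd}^2 / \stepsize^\away(\x,\x^*)^2 = \norm{\dd}^2 \cdot \big(\frac{\innerProd{\nabla f(\x)}{\s_f(\x)-\vv_f(\x)}}{-\innerProd{\nabla f(\x)}{\dd}}\big)^2 \geq \norm{\dd}^2 \cdot \frac{\PdirW(\domain)^2}{\norm{\dd}^2} = \PdirW(\domain)^2$, and multiplying by $\mu/2$ and by the factor $2$ in \eqref{eq:muf} yields $\strongConvMFW \geq \mu\,\PdirW(\domain)^2$ after taking the double infimum over $(\x,\x^*)$.

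The main obstacle I anticipate is the subtle geometric bookkeeping around which face $\Kface$ to use and verifying that $-\nabla f(\x)$ (projected appropriately onto the affine hull of the minimal face containing $\x$, since the gradient may have a component orthogonal to that face) is a valid feasible direction entering the infimum in \eqref{eq:PdirW} — in particular that replacing $\nabla f(\x)$ by its projection does not change any of the inner products $\innerProd{\nabla f(\x)}{\vv - \x}$ for $\vv$ in that face, so that $\s_f(\x), \vv_f(\x)$ and the width are computed consistently within $\Kface$. A secondary technical point is handling the degenerate case where $\s_f(\x) = \vv_f(\x)$ (then $\x$ is already optimal on $\domain$ and the feasibility hypothesis $\innerProd{\nabla f(\x)}{\x^*-\x}<0$ fails, so this case is vacuous) and confirming that the normalization $\dualnorm{\dd} = \norm{\dd}$ from the inner-product-norm assumption is what makes $\dirW$ and $\PdirW$ compatible with the Cauchy–Schwarz estimate.
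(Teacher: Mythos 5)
Your opening moves coincide with the paper's: strong convexity lower-bounds the Bregman term by $\tfrac{\mu}{2}\norm{\x^*-\x}^2$, and the problem reduces to showing that $\innerProd{-\nabla f(\x)}{\s_f(\x)-\vv_f(\x)}\big/\innerProd{-\nabla f(\x)}{\hat{\r}}\geq \PdirW(\domain)$ for the unit direction $\hat{\r}$ from $\x$ to $\x^*$, with Remark~\ref{thm:PdirWproperty} identifying the numerator and Cauchy--Schwarz handling the denominator. But the step you yourself call ``the delicate point'' is exactly where the argument breaks, and your proposed repair does not close it. The infimum in \eqref{eq:PdirW} ranges only over \emph{feasible} directions $\dd\in\text{cone}(\Kface-\x)\setminus\{\0\}$, and $-\nabla f(\x)$ is in general not feasible even when $\x$ is far from optimal (e.g.\ $\x$ on a facet with the gradient tilted outward while descent directions along the facet remain). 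Your fix---project $-\nabla f(\x)$ onto the affine hull of the \emph{minimal} face $\Kface_{\x}$ containing $\x$---fails for two reasons. First, the projection can be $\0$ (e.g.\ when $\x$ is a non-optimal vertex, so $\Kface_{\x}=\{\x\}$), and the bound collapses. Second, even when it is nonzero, the norms no longer match: invoking the pyramidal width for the projected direction $\dd'$ produces a numerator bound scaled by $\dualnorm{\dd'}$, so the denominator $\innerProd{-\nabla f(\x)}{\x^*-\x}$ would have to be bounded by $\dualnorm{\dd'}\,\norm{\x^*-\x}$; this fails because $\x^*-\x$ generally does not lie in $\text{span}(\Kface_{\x}-\x)$ (your claim that $\x^*-\x\in\text{cone}(\Kface_{\x}-\x)$ is false, since $\x^*$ need not belong to the minimal face of $\x$), so the component of the gradient orthogonal to that face can contribute positively to the denominator while $\dualnorm{\dd'}/\dualnorm{\nabla f(\x)}$ can be arbitrarily small.

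This is precisely the gap the paper spends the bulk of its proof closing: it descends through a chain of faces $\Kface_0\supset\Kface_1\supset\dots$, where each $\Kface_{l+1}$ is a facet \emph{exposed} by the current projected direction $\dd_l$ at $\x$ (not the minimal face), projecting one facet at a time. The exposed-facet property guarantees that the discarded orthogonal component $\dd_l-\dd_{l+1}$ has non-positive inner product with every point of the current face, which is what keeps the denominator-type quantity $\max_{\y}\innerProd{\dd_l}{\y/\norm{\y}}$ unchanged under the projection (inequality \eqref{eq:d1r1}), while \eqref{eq:d1sf} shows the numerator only moves the favorable way; it also preserves $\innerProd{\dd_l}{\s(\Kface_l,\dd_l)}>0$, so the projected direction never vanishes and the recursion terminates at a face of dimension at least one on which $\dd_l$ is feasible---at which point your Cauchy--Schwarz plus Remark~\ref{thm:PdirWproperty} step applies verbatim. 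Without this iterative exposed-facet argument (or an alternative that simultaneously controls the numerator, the denominator, and the non-vanishing of the projected direction), the proposal does not establish the lemma.
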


\vspace{-2mm}

\begin{theorem}\label{thm:linear_convergenceMFW}
Suppose that $f$ has smoothness constant $\CfMFW$,\footnote{%
For a convenience in the proof, we use a slightly modified curvature constant $\CfMFW$, which is identical to the definition of $\Cf$, except that both positive and negative step-sizes are allowed, i.e. the range of $\stepsize$ in the definition for $\Cf$ is replaced by $[-1,1]$ instead of just $[0,1]$. 
Note that boundedness of this (again affine invariant) $\CfMFW$ is still implied by the Lipschitz continuity of the gradient of $f$ (over the slightly larger domain $\domain + \domain - \domain$, but with the same diameter constant).}%
as well as geometric strong convexity constant~$\strongConvMFW$ as defined in~(\ref{eq:muf}).
Then the error of the iterates of the FW algorithm with away-steps\footnote{%
In the algorithm, one can either use line-search or set the step-size as the feasible one that minimizes the quadratic upper bound given by the curvature $\CfMFW$%
, i.e. $\stepsize_k := \min\{1,\stepmax, \stepbound_k\}$ where $\stepbound_k := \frac{g_k}{2\CfMFW}$ and\vspace{-2mm} $g_k :=  \langle  -\nabla f(\x^{(k)}), \s_k - \vv_k \rangle$.
}
(Algorithm~\ref{alg:MFW})
decreases geometrically at each step that is not a drop step (i.e. when $\stepsize_k < \stepmax$), that is\vspace{-2mm}
\[
h_{k+1} \leq \left(1-\rho_f^\away\right) h_k \ ,\vspace{-2mm}
\]
where $\rho_f^\away := \frac{\strongConvMFW}{4\CfMFW}$. 
Moreover, the number of drop steps up to iteration $k$ is bounded by $k/2$. %
This yields the global linear convergence rate of $h_k \leq h_0 \exp(-\frac12 \rho_f^\away k)$.
\end{theorem}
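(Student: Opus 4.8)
The plan is to mirror the structure of the classical descent-lemma argument, but to measure both the progress at each step and the distance to optimality using the \emph{same} affine-invariant quantities, so the ratio comes out clean. Fix an iteration $k$ that is not a drop step, write $\x = \x^{(k)}$, $h_k = f(\x) - f(\x^*)$, let $\s = \s_f(\x)$ and $\vv = \vv_f(\x)$ be the Frank-Wolfe and worst-case away vertices, and let $\dd_k$ be the direction actually chosen by the algorithm. The first step is a \emph{progress lower bound}: since $\dd_k$ is the better of the FW and away directions in terms of the inner product with $-\nabla f(\x)$, we have $\langle -\nabla f(\x), \dd_k\rangle \ge \tfrac12 \langle -\nabla f(\x), \s - \vv\rangle =: \tfrac12 g_k$. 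Using the curvature bound with the modified constant $\CfMFW$ (which allows step-sizes in $[-1,1]$, covering the away step), the line-search or the explicit step $\stepsize_k := \min\{1,\stepmax,\stepbound_k\}$ gives $h_k - h_{k+1} \ge \min\{\tfrac{g_k}{2}\cdot(\text{something}), \tfrac{g_k^2}{4\CfMFW}\}$ — the usual two-case analysis depending on whether the unconstrained minimizer of the quadratic model is feasible. Since we have excluded drop steps, $\stepmax$ does not bind in the bad way, so one should be able to reduce to $h_k - h_{k+1} \ge \tfrac{g_k^2}{8\CfMFW}$ (or a similar constant), possibly after also arguing that when $\stepbound_k \ge 1$ one gets an even larger decrease $\ge g_k/4$.

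The second, and genuinely new, step is a \emph{dual-gap lower bound in terms of $h_k$} via $\strongConvMFW$. Apply definition~\eqref{eq:muf} with the roles $\x \leftarrow \x$ and $\x^* \leftarrow \x^*$ (note $\langle \nabla f(\x), \x^* - \x\rangle < 0$ since $\x \ne \x^*$ and $f$ is strongly convex along that segment). This gives
\[
f(\x^*) - f(\x) - \langle \nabla f(\x), \x^* - \x\rangle \ge \tfrac12 \strongConvMFW\, \stepsize^\away(\x,\x^*)^2,
\]
i.e. $-h_k \ge \langle \nabla f(\x), \x^* - \x\rangle + \tfrac12 \strongConvMFW\, \stepsize^\away(\x,\x^*)^2$. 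Now $\langle \nabla f(\x), \x^* - \x\rangle \ge -g^{\text{FW}}_k$ where $g^{\text{FW}}_k = \langle -\nabla f(\x), \s - \x\rangle \le g_k$, and the point of the definition of $\stepsize^\away$ is exactly that $\stepsize^\away(\x,\x^*) = \langle \nabla f(\x), \x^*-\x\rangle / \langle \nabla f(\x), \s - \vv\rangle = (\text{something of order } h_k)/g_k$. Substituting and minimizing the resulting quadratic-in-$\stepsize^\away$ expression (or just completing the square) should yield $g_k^2 \ge 2\strongConvMFW\, h_k$, or equivalently $g_k \ge \sqrt{2\strongConvMFW h_k}$. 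Chaining with the progress bound gives $h_k - h_{k+1} \ge \tfrac{2\strongConvMFW h_k}{8\CfMFW} = \tfrac{\strongConvMFW}{4\CfMFW} h_k$, which is exactly $h_{k+1} \le (1 - \rho_f^\away) h_k$.

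The third step is the \emph{drop-step accounting}. Each drop step removes at least one vertex from the active set $\Coreset^{(k)}$, and each non-drop step adds at most one vertex (a FW step adds $\s_k$; an away non-drop step adds none). Since $|\Coreset^{(0)}| = 1$ and the cardinality stays $\ge 1$, after $k$ iterations the number of drop steps cannot exceed the number of non-drop steps plus a constant, hence is at most $k/2$ (for $k$ large, or $\lfloor k/2 \rfloor$ after the obvious bookkeeping). Therefore at least $k/2$ of the iterations are geometric-decrease steps, and on the remaining (drop) steps $h$ is nonincreasing by the line-search, giving $h_k \le (1-\rho_f^\away)^{k/2} h_0 \le h_0 \exp(-\tfrac12 \rho_f^\away k)$.

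The main obstacle I expect is the second step: getting the inequality $g_k^2 \ge 2\strongConvMFW h_k$ to come out with no stray constants requires that the $\s$ and $\vv$ appearing implicitly in $\strongConvMFW$ (through $\s_f(\x)$ and $\vv_f(\x)$) be \emph{the same} vertices that control the algorithm's progress in step one — and in particular that the worst-case away vertex $\vv_f(\x)$ is dominated by the actual away vertex $\vv_k \in \Coreset^{(k)}$, so that $\langle -\nabla f(\x), \s - \vv_f(\x)\rangle \le \langle -\nabla f(\x), \s_k - \vv_k\rangle = g_k$. This is precisely why $\vv_f$ is defined as the minimizer over \emph{all} active-set expansions $\S_\x$, and checking that this definitional choice makes the two bounds compatible (rather than losing a factor somewhere) is the delicate part; the curvature two-case analysis and the drop-step counting are routine by comparison.
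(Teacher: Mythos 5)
Your proposal is correct and follows essentially the same route as the paper's proof: the direction-choice gap inequality $\langle -\nabla f(\x^{(k)}),\dd_k\rangle \ge g_k/2$, the upper bound $h_k \le g_k^2/(2\strongConvMFW)$ obtained by completing the square in $\stepsize^\away(\x^{(k)},\x^*)$ (where, as you anticipate, $\vv_f(\x^{(k)})$ is dominated by $\vv_k$ precisely because $\Coreset^{(k)}\in\S_{\x^{(k)}}$), the curvature/step-size case analysis giving $h_k-h_{k+1}\ge g_k^2/(8\CfMFW)$ (or $g_k/4$ when $\stepbound_k>1$), and the active-set cardinality count yielding $D_k\le k/2$. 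The only point deserving an explicit argument is the non-drop away step with $\stepmax<1$: there the paper uses convexity of $\stepsize\mapsto f(\x^{(k)}+\stepsize\dd_k)$ to conclude that an interior line-search minimizer over $[0,\stepmax]$ is at least as good as any $\stepsize\in[0,1]$, with $\CfMFW$ (through its away part $\CfA$) providing the quadratic bound at the possibly infeasible points, which is exactly what your phrase ``$\stepmax$ does not bind in the bad way'' needs to mean.
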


\paragraph{Acknowledgements.}
Simon Lacoste-Julien acknowledges support by the ERC (SIERRA-ERC-239993).
Martin Jaggi acknowledges support
by the Simons Institute for the Theory of Computing,
by the Swiss National Science Foundation (SNSF), 
and by the ERC Project SIPA.

\bibliographystyle{alphaurl} %
{\small
\bibliography{bibliography}
}%

\newpage
\appendix

\section{Linear Convergence of Frank-Wolfe for Strongly Convex Functions with Optimum in the Interior}

\subsection{An Affine Invariant Notion of Strong Convexity}
We re-state and interpret the ``interior'' strong convexity constant~$\strongConvFW$ as defined in~(\ref{eq:mufFW}), that is
\[
  \strongConvFW := \inf_{\substack{\x\in \domain\setminus\{\x^*\}\\
                      \overline\s = \overline\s(\x, \x^*,\domain),\\%
                      \stepsize\in(0,1],\\
                      \y = \x+\stepsize(\overline\s-\x)}}
           \frac{2}{\stepsize^2}\big( f(\y)-f(\x)-\langle \nabla f(\x), \y-\x\rangle \big) \ .
\]
Here the point $\overline\s$ is defined to be the point where the ray from $\x$ to $\x^*$ pinches the boundary of the set $\domain$, %
i.e. $\overline\s(\x, \x^*,\domain) := \textrm{ray}(\x, \x^*) \cap \partial \domain$.

Recalling that the curvature $C_f$ by definition (\ref{eq:Cf}) provides an affine-invariant quadratic upper bound on the function $f$, the strong convexity constant $\strongConvFW$ here gives rise to an analogous quadratic lower bound, that is
\begin{equation}\label{eq:quadLowerBound}
f(\y)-f(\x)-\langle \nabla f(\x), \y-\x \rangle
\geq \frac{\stepsize^2}{2} \strongConvFW
\end{equation}
if the  point $\y = \x+\stepsize(\overline\s-\x)$ is determined by the boundary point $\overline\s(\x, \x^*,\domain)$ %
and an arbitrary step-size $\stepsize\in[0,1]$, i.e., if the point $\y$ lies on the segment which is between $\x$ and the boundary of~$\domain$, and passes through $\x^*$.
Here we prove Lemma \ref{lem:muFWinterpretation}, which gives a simple geometric interpretation of the abstract (affine invariant) quantity $\strongConvFW$ defined above, in terms of classical norms and strong-convexity properties.

\begin{replemma}{lem:muFWinterpretation}
Let $f$ be a convex differentiable function and suppose $f$ is \emph{strongly convex} w.r.t. some arbitrary norm $\norm{.}$ over the domain $\domain$ with strong-convexity constant $\mu>0$.

Furthermore, suppose that the (unique) optimum $x^*$ lies in the relative interior of $\domain$, i.e. $\distToBoundary := \inf_{\s\in\partial \domain} \norm{\s-\x^*} > 0$.
Then\vspace{-1mm}
\[
\strongConvFW \geq \mu \cdot \distToBoundary^2  \ .
\] %
\end{replemma}
\begin{proof}
By definition of strong convexity with respect to a norm, we have that for any $\x,\y\in\domain$,
\[
f(\y)- f(\x)- \langle\nabla f(\x), \y-\x \rangle
\geq \textstyle\frac{\mu}{2} \norm{\y-\x}^2 \ .
\]
We  want to use this lower bound in the definition (\ref{eq:mufFW}) of the affine invariant strong convexity constant. 
Observe that $\frac{1}{\stepsize^2} \norm{\y-\x}^2 = \norm{\overline\s-\x}^2$ for any $\x$ used in~\eqref{eq:mufFW}  since $\y := \x+\stepsize(\overline\s-\x) \in\domain$ by convexity. Moreover, by the definition of $\overline\s$ and $\distToBoundary$, $\norm{\overline\s-\x} \geq \norm{\overline\s-\x^*} \geq \distToBoundary$. Therefore, we can lower bound $\strongConvFW$ as
\[
\strongConvFW  
\geq \inf
          \textstyle \frac{2}{\stepsize^2}
          \frac{\mu}{2} \norm{\y-\x}^2
= \inf \mu \norm{\overline\s-\x}^2
\geq \mu \cdot \distToBoundary^2 \ ,
\]
which is the claimed bound.
\end{proof}

\subsection{Convergence Analysis}
\paragraph{Curvature.}
The definition of the curvature constant $\Cf$ as in (\ref{eq:Cf}) directly gives an affine invariant quadratic upper bound on the objective function, as follows:

Let $\x_\stepsize :=  \x + \stepsize (\s-\x)$ be the point obtained by moving with step-size $\stepsize$ in direction $\s\in\domain$. By definition of $\Cf$, we have
\[\textstyle
f(\x_\stepsize) ~\leq~ f(\x) + \stepsize \left\langle  \nabla f(\x), \s-\x \right\rangle + \frac{\stepsize^2}{2} \Cf,~~~~~\forall \stepsize \in [0,1] \ .
\]
This crucial bound enables us to analyze the objective improvement in each iteration in Frank-Wolfe-type algorithms, as in \cite{Jaggi:2013wg}:
If the point $\s$ is the standard Frank-Wolfe direction returned by an exact linear oracle, then the middle quantity is exactly the negative of the duality gap, $\left\langle  \nabla f(\x), \s-\x \right\rangle = -g(\x)$.
If an inexact linear oracle is used instead, which has multiplicative approximation quality~$\mapprox$ (to be defined below), then we always have the upper bound
\begin{equation}\label{eq:CfUpperBound}\textstyle
f(\x_\stepsize) ~\leq~ f(\x) - \stepsize \mapprox g(\x) + \frac{\stepsize^2}{2} \Cf
,~~~~~\forall \stepsize \in [0,1] \ .
\end{equation}

\paragraph{Inexact Linear Oracles.}
The standard linear oracle used inside the classical Frank-Wolfe algorithm is given by $\s \in \argmin_{\vv \in \domain} \left\langle \nabla f(\x), \vv \right\rangle$.
We say that the linear oracle %
satisfies multiplicative accuracy~$\mapprox$ for some $\mapprox\in[0,1]$, if for any $\x\in\domain$, the returned $\s$ is such that
\begin{equation}\label{eq:qualityMult}
\left\langle \nabla f(\x),  \x-\s  \right\rangle
~\ge~
\displaystyle \mapprox\cdot \max_{\s' \in \domain} \textstyle\left\langle \nabla f(\x), \x-\s'  \right\rangle
\ .
\end{equation}
Note that the classical Frank-Wolfe direction $\s$ satisfies this inequality with $\mapprox=1$.
The inequality means that the oracle answer $\s$ attains at least a $\mapprox$-fraction of the current \emph{duality gap} $g(\x) := \displaystyle\max_{\s \in \domain} \,\big\langle \nabla f(\x^{(k)}), \x - \s  \big\rangle$ as defined by \cite{Jaggi:2013wg}.

\emph{Related work.} The sublinear convergence of Frank-Wolfe with $O(1/k)$ is known to also hold if this linear subproblems are only solved approximately (meaning that the linear oracle is inexact).
For additive approximation accuracy, this was shown by \cite{Dunn:1978di,Dunn:1979da} for the line-search case, and by \cite{Jaggi:2011ux,Jaggi:2013wg} for the simpler $\frac2{k+2}$ step-size and the primal-dual convergence. For multiplicative accuracy (relative to the duality gap), it was shown by \cite[Appendix C]{LacosteJulien:2013ue}.
The case of the inexact or noisy gradient information can also be analyzed in the same way, as discussed in $\cite{Jaggi:2013wg,Freund:2013uk}$.

\paragraph{Linear Convergence Proof.}
Here we prove a slightly stronger version of Theorem \ref{thm:linearConvergenceFW}, showing the linear convergence also in the case where the linear subproblems in each iteration are only solved approximately.
The exact oracle case is obtained for $\mapprox :=1$.

\begin{reptheorem}{thm:linearConvergenceFW}
Suppose that $f$ has smoothness constant $\Cf$ as defined in~(\ref{eq:Cf}),
as well as ``interior'' strong convexity constant~$\strongConvFW$ as defined in~(\ref{eq:mufFW}).

Then the error of the iterates of the Frank-Wolfe algorithm with step-size $\stepsize := \min\{1, \frac{\mapprox g_k}{\Cf} \}$
(or using line-search) decreases geometrically, that is\vspace{-1mm}
\[
h_{k+1} \leq \left(1-\rho_f^\FW\right) h_k \ , \vspace{-2mm}
\]
where $\rho_f^\FW := \min \{\frac\mapprox2, \mapprox^2 \frac{\strongConvFW}{\Cf} \}$. Here in each iteration, $h_k := f(\x^{(k)}) - f(\x^*)$ denotes the primal error, and $g_k :=  g(\x^{(k)}) := \displaystyle\max_{\s \in \domain} \,\big\langle \nabla f(\x^{(k)}), \x^{(k)} - \s  \big\rangle$ is the duality gap as defined by \cite{Jaggi:2013wg}, 
and $\mapprox\in[0,1]$ is the multiplicative approximation quality to which the linear sub-problems are solved.
\end{reptheorem}
\begin{proof}
Applying the strong convexity bound (\ref{eq:quadLowerBound}) at the current iterate $\x:=\x^{(k)}$ for the special step-size $\overline\stepsize$ such that $\y = \x^{(k)}+\overline\stepsize (\overline\s-\x^{(k)}) = \x^*$ gives
\[
\begin{array}{rll}
\frac{\overline\stepsize^2}{2} \strongConvFW \  \leq
&f(\y)-f(\x) -\langle \nabla f(\x), \y-\x \rangle \\
=&f(\x^*)-f(\x^{(k)}) - \overline\stepsize \left\langle \nabla f(\x^{(k)}), \overline\s-\x^{(k)}\right\rangle \\
\le &-h_k + \overline\stepsize  g_k \ .
\end{array}
\]
Therefore $h_k \le -\frac{\overline\stepsize^2}{2} \strongConvFW + \overline\stepsize g_k$,
which is upper bounded by $%
\frac{{g_k}^2}{2\strongConvFW}$.\\
{\small(Here we have used the trivial inequality $0 \le %
a^2-2ab+b^2$ for the choice of numbers $a:=\frac{g_k}{\strongConvFW}$ and $b:=\overline\stepsize$)}

We now want to use the curvature definition to lower bound the absolute progress~$h_k - h_{k+1}$. The definition of the curvature $\Cf$ in the form of the quadratic upper bound \eqref{eq:CfUpperBound} reads as %
$h_k - h_{k+1} \ge \stepsize \mapprox g_k - \frac{\stepsize^2}{2} \Cf$. Using this for the particular step-size $\stepsize:=\frac{\mapprox g_k}{\Cf}$, the r.h.s. is $= \frac{\mapprox^2 g_k^2}{2\Cf}$. (The border case when $\frac{\mapprox g_k}{\Cf}>1$ will be discussed separately below). The same inequality also holds in the line-search case, as the improvement only gets better.
Combining the two bounds, we have obtained
\[
\frac{h_k - h_{k+1} }{h_k} \ge \mapprox^2 \frac{\strongConvFW}{\Cf}\vspace{-1mm}
\]
implying that we have a geometric rate of decrease $h_{k+1} \leq \Big(1-\mapprox^2\frac{\strongConvFW}{\Cf}\Big) h_k$.

\emph{Border case.} In the above analysis, we have assumed that the step-size $\stepsize:=\frac{\mapprox g_k}{\Cf} \le 1$. If this is not the case (i.e. if $\mapprox g_k > \Cf$), then the actual step-size in the algorithm is clipped to $1$, in which case the curvature upper bound~\eqref{eq:CfUpperBound} for $\stepsize:=1$ gives 
$h_k - h_{k+1} 
~\geq~ \mapprox g_k - \frac12 \Cf 
> \mapprox g_k - \frac12 \mapprox g_k
= \frac\mapprox2 g_k$. Using that the main property $h_k \leq g_k$ of the duality gap (by convexity), we therefore have $\frac{h_k - h_{k+1}}{h_k} > \frac\mapprox2$, which gives a geometric decrease of the error with constant $1-\frac\mapprox2$.
\end{proof}

\section{Linear Convergence of FW with Away-Steps under Strong Convexity}
\subsection{Interpretation of the Geometric Strong Convexity Constant $\strongConvMFW$}
The geometric strong convexity constant $\strongConvMFW$, as defined in (\ref{eq:muf}), is affine invariant, since it only depends on the inner products of feasible points with the gradient. Also, it combines both the complexity of the function~$f$ and the geometry of the domain $\domain$.
The goal of this subsection is to prove Lemma~\ref{lem:muFdirWinterpretation}, which provides a geometric interpretation of $\strongConvMFW$. The lemma allows us to bound the constant $\strongConvMFW$ in terms of the strong convexity of the objective function, combined with a purely geometric complexity measure of the domain $\domain$. In the following Section \ref{sec:MFWconv} below, we will show the linear convergence of Algorithm~\ref{alg:MFW} under the assumption that $\strongConvMFW > 0$. From the view of Lemma~\ref{lem:muFdirWinterpretation}, $\strongConvMFW > 0$ is a slightly weaker condition than the strong convexity of the function over a polytope domain (it is implied by strong convexity).

We recall the definition of $\strongConvMFW$ as given in (\ref{eq:muf}):
\[
  \strongConvMFW := \inf_{\x\in \domain} \inf_{\substack{\x^* \in \domain\\
                        \textrm{s.t. } \left\langle \nabla f(\x), \x^*-\x \right\rangle < 0 }}
           \frac{2}{{\stepsize^\away(\x,\x^*)}^2}
           \big( f(\x^*)-f(\x)-\left\langle \nabla f(\x),  \x^*-\x \right\rangle \big) \ .
\]\vspace{-2mm}

Here the positive quantity $\stepsize^\away(\x,\x^*) := \frac{\innerProd{\nabla f(\x)}{\x^*-\x} }{\innerProd{\nabla f(\x)}{\s_f(\x) - \vv_f(\x)} }$ plays the role of $\stepsize$ in the analogous upper bound definition~\eqref{eq:Cf} for the curvature. We recall that $\s_f(\x) := \argmin_{\vv \in \Vertices} \left\langle \nabla f(\x), \vv \right\rangle$ and that $\vv_f(\x) := \displaystyle\argmin_{\{\vv = \vv_{\S}(\x) \,|\, \S \in \S_{\x} \}} \textstyle\left\langle \nabla f(\x), \vv \right\rangle$. 

We recall the definition of the pyramidal directional width of a set $\domain$ with respect to a direction $\dd$ and a base point $\x \in \domain$:
$
\PdirW(\domain,\dd, \x) := \min_{\S \in \S_{\x}} \dirW( \S \cup \{\s(\domain, \dd) \} , \; \dd)
$
where $\s(\domain,\dd) := \argmax_{\vv \in \domain}  \left\langle \dd, \vv \right\rangle$. We now provide a proof of Remark~\ref{thm:PdirWproperty}, which will be useful at the end of the proof of Lemma~\ref{lem:muFdirWinterpretation}.

\begin{repremark}{thm:PdirWproperty}
Let $ \vv(\x, \dd) :=$ the vertex which achieves the minimizer of  $\min_{\S \in \S_{\x}} \max_{\vv \in \S } \innerProd{\dd}{-\vv}$ for a polytope $\Kface$ and $\x \in \Kface$. Then we have
\begin{equation} \label{eq:PdirWproperty}
	\PdirW(\Kface,\dd, \x) = \innerProd{\frac{\dd}{\dualnorm{\dd}}}{ \s(\Kface,\dd) - \vv(\x, \dd)}.
\end{equation}
\end{repremark}
\begin{proof}
\begin{align*}
\PdirW(\Kface,\dd, \x) &= \frac{1}{\norm{\dd}_*} \min_{\S \in \S_{\x}} \left( \max_{\y \in  \S \cup \{\s(\Kface,\dd) \}} \innerProd{\dd}{\y} - \min_{\y \in \S \cup \{\s(\Kface,\dd) \} } \innerProd{\dd}{\y} \right) \\
	&= \frac{1}{\norm{\dd}_*} \min_{\S \in \S_{\x}} \left( \innerProd{\dd}{\s(\Kface,\dd)} + \max_{\y \in \S} \innerProd{\dd}{-\y} \right) \\
	&= \frac{1}{\norm{\dd}_*} \left( \innerProd{\dd}{\s(\Kface,\dd)} +  \min_{\S \in \S_{\x}} \max_{\y \in \S} \innerProd{\dd}{-\y} \right) \\
	&=\innerProd{\frac{\dd}{\dualnorm{\dd}}}{ \s(\Kface,\dd) - \vv(\x, \dd)}.
	\vspace{-5mm}
\end{align*}
\end{proof}

\paragraph{Exposing a facet of a polytope.} Finally, we introduce a final concept that will be useful in the proof. We say that a direction $\dd$ \textbf{exposes a facet}\footnote{As a reminder, we define a \emph{k-face} of $\domain$ (a $k$-dimensional face of $\domain$) a set $\Kface$ such that $\Kface = \domain \cap \{ \y : \innerProd{\r}{\y - \x} = \0 \}$ for some normal vector $\r$ and fixed reference point $\x \in \Kface$ with the additional property that $\domain$ lies on one side of the given half-space determined by $\r$ i.e. $ \innerProd{\r}{\y - \x} \leq \0$ $\forall \y \in \domain$. $k$ is the dimensionality of the affine hull of $\Kface$. We call a $k$-face of dimensions $k = 0$, $1$, $\textrm{dim}(\domain)-2$ and $\textrm{dim}(\domain)-1$ a \emph{vertex}, \emph{edge}, \emph{ridge} and \emph{facet} respectively. $\domain$ is a $k$-face of itself with $k = \textrm{dim}(\domain)$. See definition 2.1 in~\cite{Ziegler:1995td}.}  $\mathcal{F}$ of the polytope $\domain$ \textbf{at $\x$} if 1) $\mathcal{F}$ includes $\x$ and is a facet of $\domain$; and 2) the orthogonal component of $\dd$ to this facet defines this facet with $\dd$ on one side and $\domain - \x$ on the other side. In other words, let $\mathcal{F}_{\s} := \textrm{span}(\domain - \x)$ be the affine hull of $\mathcal{F}$ re-centered at $\x$; let $\proj_{\mathcal{F}_{\s}}$ be the orthogonal projection operator onto $\mathcal{F}_{\s}$; then the second condition can be expressed as $\mathcal{F} = \{\y \in \domain : \innerProd{(\id - \proj_{\mathcal{F}_{\s}})\dd}{\y-\x} = 0 \}$ and $\innerProd{(\id - \proj_{\mathcal{F}_{\s}})\dd}{\y-\x} \leq 0$ $\forall \y \in \domain$ (note that $(\id - \proj_{\mathcal{F}_{\s}})\dd$ is the orthogonal component of $\dd$ to the facet $\mathcal{F}$). Note that these conditions imply that $\dd$ cannot be a feasible direction, i.e. $\dd \notin \textrm{cone}(\domain-\x)$ and that $\x$ must be on the (relative) boundary of $\domain$. It turns out that the converse is also true: if $\dd \notin \textrm{cone}(\domain-\x)$, then there must exist at least a facet of $\domain$ exposed by $\dd$ at $\x$.\footnote{To find such an exposed facet, consider the $\mathcal{H}$-polyhedron representation of $\textrm{cone}(\domain-\x)$ (see~\cite{Ziegler:1995td}). As $\dd$ is not feasible, at least one halfspace constraint must be violated; the intersection of the hyperplane determining this halfspace constraint with $\domain-\x$ yields (the translation of) one exposed facet.}

\begin{replemma}{lem:muFdirWinterpretation}
Let $f$ be a convex differentiable function and suppose that $f$ is $\mu$-\emph{strongly convex} w.r.t. some \emph{inner product norm} $\norm{\cdot}$ over the domain $\domain$ with strong-convexity constant $\mu \geq 0$. Then
\[
\strongConvMFW \geq \mu \cdot \left( \PdirW(\domain) \right)^2 \ .
\] 
\end{replemma}
\begin{proof}
By definition of strong convexity with respect to a norm, we have that for any $\x,\y\in\domain$,
\begin{equation} \label{eq:strongConv}
f(\y)- f(\x)- \langle\nabla f(\x), \y-\x \rangle
\geq \textstyle\frac{\mu}{2} \norm{\y-\x}^2 \ .
\end{equation}
Using the strong convexity bound~\eqref{eq:strongConv} with $\y := \x^*$ on the right hand side of equation~\eqref{eq:muf} (and using the shorthand $\r_{\x} := -\nabla f(\x)$ ), we thus get:
\begin{align}
\strongConvMFW \geq&  \inf_{\substack{\x, \x^* \in \domain\\
                                   \textrm{s.t. } \innerProd{\r_{\x}}{\x^*-\x} > 0}}
                      \mu \left(  \frac{\innerProd{\r_{\x}}{ \s_f(\x) - \vv_f(\x)}}{\innerProd{\r_{\x}}{\x^*-\x}} \norm{{\x^*-\x}} \right)^2 \nonumber \\
		&=  \mu \inf_{\substack{\x \neq \x^* \in \domain \\
                        \textrm{s.t. } \innerProd{\r_{\x}}{ \hat{\r}_{\x,\x^*}} > 0}}
           \left(  \frac{\innerProd{\r_{\x}}{ \s_f(\x) - \vv_f(\x)}}{\innerProd{\r_{\x}}{ \hat{\r}_{\x,\x^*}}} \right)^2  , \label{eq:mufInitial}
\end{align}
where $\hat{\r}_{\x,\x^*} := \frac{\x^*-\x}{\norm{\x^*-\x}}$ is the unit norm feasible direction from $\x$ to $\x^*$. We are thus taking an infimum over all possible feasible directions starting from $\x$ (i.e. which moves within $\domain$) with the additional constraint that it makes a positive inner product with the negative gradient $\r_{\x}$ i.e. it is a strict descent direction. This is only possible if $\x$ is not already optimal, i.e. $\x \in \domain \setminus \X^*$ where $\X^* := \{\x^* \in \domain : \innerProd{\r_{\x^*}}{\x-\x^*} \leq 0 \,\, \forall \x \in \domain \}$ is the set of optimal points. {\small[NOTE: I know that by strong convexity it only contains one point; but I wanted to keep it general here just to see the effect of the constraints and to get more intuition about the constants]}.

The goal in the rest of the proof is to equivalently project $\r_{\x}$ onto facets of $\domain$ and then to characterize the property of its projection so that we can consider a wider set of valid directions that will thus yield a lower bound on the infimum of $\frac{\innerProd{\r_{\x}}{ \s_f(\x) - \vv_f(\x)}}{\innerProd{\r_{\x}}{ \hat{\r}_{\x,\x^*}}}$. For the rest of the proof, we fix $\x \notin \X^*$ and we work on the centered polytope at $\x$ i.e. let $\tilde{\domain} = \domain - \x$. During the proof, we work on faces $\Kface_l$ of $\tilde{\domain}$ of decreasing dimensions which all include $\x$ at their origin, as well as maintain a projection of the gradient $\dd_l \in \C_l := \text{span}(\Kface_l)$. We let $\proj_l$ be the orthogonal projection operator onto $\C_l$. We will keep projecting the gradient as $\dd_l := \proj_l \dd_{l-1}$ until they become a non-zero feasible direction from the origin i.e. $\dd_l \in \text{cone}(\Kface_l) \setminus \{\0\}$, at which point we will exit the loop with $\dd = \dd_l$ and $\Kface = \Kface_l$ for the last considered face. 

We start with $\Kface_0 = \tilde{\domain}$ and we note that since both $\s_f(\x) - \vv_f(\x)$ and $\hat{\r}_{\x,\x^*}$ belong to $\C_0 = \text{span}(\Kface_0)$, if we let $\dd_0 = \proj_0 \r_{\x}$, then we have $\frac{\innerProd{\r_{\x}}{ \s_f(\x) - \vv_f(\x)}}{\innerProd{\r_{\x}}{ \hat{\r}_{\x,\x^*}}} = \frac{\innerProd{\dd_0}{ \s_f(\x) - \vv_f(\x)}}{\innerProd{\dd_0}{ \hat{\r}_{\x,\x^*}}}$ for any $\x^*$ such that $\innerProd{\r_{\x}}{\x^*-\x} \neq 0$. Then we consider whether $\dd_0$ is a feasible direction in $\Kface_0$. If $\dd_0$ is feasible i.e. $\dd_0 \in \text{cone}(\Kface_0)$, then we stop with $\dd = \dd_0 = \proj_0 \r_{\x}$ and $\Kface = \Kface_0$. By the definition of the dual norm $\dualnorm{\cdot}$ (generalized Cauchy-Schwartz), we have $\innerProd{\dd}{ \hat{\r}_{\x,\x^*}} \leq \dualnorm{\dd}\norm{\hat{\r}_{\x,\x^*}} = \dualnorm{\dd} \cdot 1$, and thus for this $\x$ we have:
\[ 
	\inf_{\substack{\x^* \in \domain\\
               \textrm{s.t. } \innerProd{\r_{\x}}{ \hat{\r}_{\x,\x^*}} > 0}}
                       \frac{\innerProd{\r_{\x}}{ \s_f(\x) - \vv_f(\x)}}{\innerProd{\r_{\x}}{ \hat{\r}_{\x,\x^*}}} \geq  \innerProd{\frac{\dd_0}{\dualnorm{\dd_0}}}{ \s_f(\x) - \vv_f(\x)} .
\]
In the other possibility ($\dd_0 \notin \text{cone}(\Kface_0)$), then there must exist a least one facet $\Kface_1$ of $\Kface_0$ that is exposed by $\dd_0$ at $\0$ (note that we cannot have $\dd_0 = \0$ since $\x \notin \X^*$). %
We now project $\dd_0$ on $\text{span}(\Kface_1)$: $\dd_1 := \proj_1 \dd_0$, and we show how the lower bound transforms.
This yields the following inequalities:
\begin{align}
\innerProd{\r_{\x}}{ \s_f(\x) - \vv_f(\x)}  &= 
	 \max_{\s \in \domain} \innerProd{\r_{\x}}{ \s - \x} +
	 \min_{\S \in \S_{\x}} \max_{\vv \in \S} \innerProd{-\r_{\x}}{ \vv - \x}  \nonumber \\
	 &= \max_{\y \in \Kface_0} \innerProd{\dd_0}{ \y } +
	 	 \min_{\S \in \S_{\x}} \max_{\vv \in \S} \innerProd{-\dd_0}{ \vv - \x} \nonumber \\
	 &\ge \max_{\y \in \Kface_1} \innerProd{\dd_0}{ \y } +
	 	 	 \min_{\S \in \S_{\x}} \max_{\vv \in \S \cap (\Kface_1 + \x)} \innerProd{-\dd_0}{ \vv - \x} \nonumber \\
	 &= \max_{\y \in \Kface_1} \innerProd{\dd_1}{ \y } +
	 	 	 \min_{\S \in \S_{\x}} \max_{\vv \in \S } \innerProd{-\dd_1}{ \vv - \x} \nonumber \\
	 &= \innerProd{\dd_1}{\s(\Kface_1, \dd_1)} + \innerProd{-\dd_1}{ \vv(\x, \dd_1) - \x} . \label{eq:d1sf}
\end{align}
From the first to the second line, we used the fact that $\innerProd{\r_{\x} - \dd_0}{\y} = 0$ for any $\y \in \Kface_0 = \domain - \x$ as $\dd_0$ is the orthogonal projection of $\r_{\x}$ on $\C_0 = \text{span}(\Kface_0)$ (and thus we also have that $\innerProd{\r_{\x}}{\s_f(\x) - \x} = \innerProd{\dd_0}{\s(\Kface_0, \dd_0)}$). To go from the second to the third line, we use the fact that the first term yields an inequality as $\Kface_1 \subseteq \Kface_0$. Also, let $\Kface_{\x}$ be the minimal dimensional face of $\domain$ containing $\x$ (and thus $\x$ is in the relative interior of $\Kface_{\x}$). Note that $\bigcup \S_{\x} = \text{vertices}(\Kface_{\x})$, and also that $\Kface_{\x}$ is included in any other face containing $\x$. We thus have $\S \subseteq \Kface_1 + \x$ for any $\S \in \S_{\x}$ and thus the second term on the second line yielded an equality. The fourth line used the fact that $\dd_0 - \dd_1$ is orthogonal to members of $\Kface_1$. The fifth line used the definition of $\s(\Kface_1, \dd_1)$ and introduced the notation $ \vv(\x, \dd) :=$ the vertex $\vv \in \Kface_{\x}$ which achieves the minimizer of $\min_{\S \in \S_{\x}} \max_{\vv \in \S } \innerProd{\dd}{ -\vv}$.

To deal with $\innerProd{\r_{\x}}{ \hat{\r}_{\x,\x^*}} = \innerProd{\dd_0}{ \hat{\r}_{\x,\x^*}}$, we use the crucial fact that $\dd_0$ \emph{exposes the facet} $\Kface_1$ of $\Kface_0$. This implies that $\innerProd{\dd_0-\proj_0 \dd_0}{ \hat{\r}_{\x,\x^*}} \leq 0$ for all $\x^*-\x \in \Kface_0 \setminus \{\0\}$. So consider $\r_0 := \displaystyle \argmax_{\substack{\y \in \Kface_0 \\ 
				 \innerProd{\dd_0}{\y} > 0}} \innerProd{\dd_0}{ \frac{\y}{\norm{\y}}}$. 
We claim that we can choose $\r_0 \in \Kface_1$. To see this, let $\r_1 = \proj_1 \r_0$ and write $\r_1^\perp = \r_0 - \r_1$ and $\dd_1^\perp = \dd_0 - \dd_1$. Then we have:
\begin{align}
  \innerProd{\dd_0}{\frac{\r_0}{\norm{\r_0} }}  &= \frac{1}{\norm{\r_0}} \innerProd{\dd_1 + \dd_1^\perp}{\r_1 + \r_1^\perp} \nonumber \\
	&= \frac{1}{\norm{\r_0}} \big( \innerProd{\dd_1}{\r_1} + 0 + \underbrace{\innerProd{\dd_1^\perp}{\r_1+\r_1^\perp}}_{\leq 0} \big) \nonumber \\
	&\leq  \frac{1}{\norm{\r_0}} \innerProd{\dd_1}{\r_1} \leq  \frac{1}{\norm{\r_1}} \innerProd{\dd_1}{\r_1}, \text{ and thus} , \nonumber \\
\max_{\substack{\y \in \Kface_0 \\ 
				 \innerProd{\dd_0}{\y} > 0}} \innerProd{\dd_0}{ \frac{\y}{\norm{\y}}} &= 
		\max_{\substack{\y \in \Kface_1 \\ 
						 \innerProd{\dd_1}{\y} > 0}} \innerProd{\dd_1}{ \frac{\y}{\norm{\y}}} .		 
 \label{eq:d1r1}
\end{align}
Note that in the third line, we have used that $\norm{\r_1} = \norm{\proj_1 \r_0 - \proj_1 \0} \leq \norm{\r_0 - \0}$ by the contraction property of the orthogonal projection for inner product norms.\footnote{The contraction property is only valid for inner product norms (i.e. $\norm{\cdot} = \sqrt{ \innerProd{\cdot}{\cdot}}$), so this is where the assumption that the norm was generated by an inner product comes into play. 
} In the last line, we have an equality instead of the $\leq$ inequality as $\innerProd{\dd_1}{\y} = \innerProd{\dd_0}{\y}$ $\forall \y \in \Kface_1$ and $\Kface_1 \subseteq \Kface_0$, and so we also have the $\geq$ direction. Combining the facts from~\eqref{eq:d1sf} and~\eqref{eq:d1r1}, we get in this case:
\begin{align*}
	\inf_{\substack{\x^* \in \domain\\
               \textrm{s.t. } \innerProd{\r_{\x}}{ \hat{\r}_{\x,\x^*}} > 0}}
                       \frac{\innerProd{\r_{\x}}{ \s_f(\x) - \vv_f(\x)}}{\innerProd{\r_{\x}}{ \hat{\r}_{\x,\x^*}}} 
                       &\geq  \innerProd{\dd_1}{\s(\Kface_1, \dd_1)+\x - \vv(\x, \dd_1)} 	\left( \max_{\substack{\y \in \Kface_1 \\ 
                       						 \innerProd{\dd_1}{\y} > 0}} \innerProd{\dd_1}{ \frac{\y}{\norm{\y}}} \right)^{-1} 
\end{align*}

We are now back to a similar situation as before, but with $\Kface_1$ instead of $\Kface_0$ as the reference polytope. Note that by the third line of~\eqref{eq:d1r1}, we have $\innerProd{\dd_1}{\r_1} \geq \innerProd{\dd_0}{\r_0} > 0$ and thus $\dd_1 \neq \0$ (which is crucial to avoid a trivial lower bound of zero). So again, we consider whether $\dd_1 \in \text{cone}(\Kface_1)$. If $\dd_1 \in \text{cone}(\Kface_1)$, we stop here with $\dd = \dd_1$ and $\Kface = \Kface_1$. By Cauchy-Schwartz, we again have $\displaystyle 	\max_{\substack{\y \in \Kface \\ \innerProd{\dd}{\y} > 0}} \innerProd{\dd}{ \frac{\y}{\norm{\y}}} \leq \norm{\dd}_*$, and so we conclude
\begin{align} \label{eq:muCSlower}
	\inf_{\substack{\x^* \in \domain\\
               \textrm{s.t. } \innerProd{\r_{\x}}{ \hat{\r}_{\x,\x^*}} > 0}}
                       \frac{\innerProd{\r_{\x}}{ \s_f(\x) - \vv_f(\x)}}{\innerProd{\r_{\x}}{ \hat{\r}_{\x,\x^*}}} 
                       &\geq  \innerProd{\frac{\dd}{\norm{\dd}_*}}{\s(\Kface+\x, \dd) - \vv(\x, \dd)} 	
\end{align}
where $\dd \in \text{cone}(\Kface) \setminus \{\0\}$.

If $\dd_1 \notin \text{cone}(\Kface_1)$, then we continue our iterative process: we get that $\dd_1$ exposes a facet $\Kface_2$ of $\Kface_1$. We thus project $\dd_1$ on $\Kface_2$ to get $\dd_2 = \proj_2 \dd_1$. We can repeat exactly the same argument as before to get~\eqref{eq:d1sf} and~\eqref{eq:d1r1} with $\dd_2$ and $\Kface_2$ in place of $\dd_1$ and $\Kface_1$ (and $\dd_2 \neq \0$). If $\dd_2 \in \text{cone}(\Kface_2)$, then we stop with $\dd = \dd_2$ and $\Kface = \Kface_2$ and we again get the inequality~\eqref{eq:muCSlower}. Otherwise, we get an exposed facet $\Kface_3$, and repeat the process with $\dd_3 = \proj_3 \dd_2$. This process must stop at some point $l$: at the latest, we will reach $\Kface_l = \Kface_{\x}-\x$, the minimal dimensional face containing $\0$. In this case we must have $\dd_l \in \text{cone}(\Kface_l)$ as $\0$ is in the relative interior of $\Kface_l$ for a minimal face and so all directions are feasible.
We also note that $\dd_l \neq \0$ by the argument in~\eqref{eq:d1r1} that implies $\innerProd{\dd_l}{s(\Kface_l, \dd_l)} > 0$  (this condition is crucial to avoid having a lower bound of zero!). The latter also implies that the dimensionality of $\Kface_l$ must at least be 1. Letting again $\dd = \dd_l$ and $\Kface = \Kface_l$, we get inequality~\eqref{eq:muCSlower} with $\dd \in \text{cone}(\Kface) \setminus \{\0\}$.

From this argument, we can see that by considering all the possible faces of $\tilde{\domain}$ of dimension at least one which includes $\0$, and any feasible directions for these faces, we are sure to include the $\dd$ and~$\Kface$ that appears in \eqref{eq:muCSlower}. Translating back to the affine space $\domain$ (i.e. we use $\Kface+\x$ as the face of $\domain$ which contains $\x$), we can start to vary $\x$ again. We thus obtain the following lower bound:
\begin{align*}
	\inf_{\x \notin \X^*} \inf_{\substack{\x^* \in \domain\\
               \textrm{s.t. } \innerProd{\r_{\x}}{ \hat{\r}_{\x,\x^*}} > 0}}
                       \frac{\innerProd{\r_{\x}}{ \s_f(\x) - \vv_f(\x)}}{\innerProd{\r_{\x}}{ \hat{\r}_{\x,\x^*}}} 
          &\geq \inf_{\x \notin \X^*} \inf_{\substack{\Kface \in \textrm{faces}(\domain) \\
          												  \Kface \ni \x \\
          												  \dd \in \text{cone}(\Kface-\x) \setminus \{\0\} }}       												  
           \innerProd{\frac{\dd}{\dualnorm{\dd}}}{ \s(\Kface,\dd) - \vv(\x, \dd)} \\
          &\geq  \inf_{\substack{\Kface \in \textrm{faces}(\domain) \\
                    								   \x \in \Kface \\
                    								 \dd \in \text{cone}(\Kface-\x) \setminus \{\0\} }}
                    \PdirW(\Kface,\dd, \x) 
          = \PdirW(\domain) .
\end{align*}
For the last inequality, we used~\eqref{eq:PdirWproperty} from Remark~\ref{thm:PdirWproperty}. Combining this statement with~\eqref{eq:mufInitial} concludes the proof. 
\end{proof}

\subsection{Linear Convergence Proof}\label{sec:MFWconv}

\paragraph{Curvature Constants.}
Because of the additional possibility of the away step in Algorithm~\ref{alg:MFW}, we need to define the following slightly modified additional curvature constant, which will be needed for the linear convergence analysis of the algorithm\footnote{This can be avoided if the algorithm uses the step-size that minimizes a quadratic upper bound (see the proof for Theorem~\ref{thm:linear_convergenceMFW}; we can actually use $\stepsize_k := \min\{1,\stepmax, \frac{g_k}{2\Cf}\}$); but then one needs to compute an upper bound on $\Cf$ to run the algorithm (which is not always easy). Moreover, this algorithm might have less chance to get the `best case' behavior by being less adaptive.}:
\begin{equation}\label{eq:CfA}
  \CfA := \sup_{\substack{\x,\s\in \domain, \\
                      \stepsize\in[0,1],\\
                      \y = \x+\stepsize(\x-\s)}}
           \frac{2}{\stepsize^2}\big( f(\y)-f(\x)-\langle \nabla f(\x), \y-\x \rangle \big) \ .
\end{equation}
By comparing with $\Cf$~\eqref{eq:Cf}, we see that the modification is that $\y$ is defined with the \emph{away} direction $\x -\s$ instead of a standard FW direction $\s - \x$. This might yield some $\y$'s which are outside of the domain $\domain$ (in fact, $\y \in \domain^\away := \domain + (\domain - \domain)$ in the Minkowski sense). On the other hand, by re-using a similar argument as in \cite[Lemma 7]{Jaggi:2013wg}, we can obtain the same bound~\eqref{eq:CfBound} for $\CfA$, with the only difference that the Lipschitz constant $L$ for the gradient function has to be valid on $\domain^\away$ instead of just $\domain$.
Finally, the curvature constant for Algorithm~\ref{alg:MFW} is simply the worst-case possibility between the standard FW steps and the away steps:
\begin{equation}\label{eq:CfMFW}
  \CfMFW := \max\{\Cf, \CfA\}.
\end{equation}

\begin{remark}\label{rem:mfMFWsmallerThanCf}
For all pairs of functions $f$ and domains $\domain$, it holds that $\strongConvMFW \le \Cf$ (and $\Cf \le \CfMFW$).
\end{remark}\vspace{-2mm}
\begin{proof}
Choose $\x^* := \s_f(\x)$ for an $\x$ that is an away corner (i.e. $\x = \vv_f(\x)$) in~\eqref{eq:muf}. Then $\stepsize^\away(\x,\x^*) = 1$ and so we have $\y := \x^* = \x + \stepsize (\x^*-\x)$ with $\stepsize = 1$ which can also be used in the definition of $\Cf$. Thus, we have $\strongConvMFW \leq f(\y)-f(\x) - \langle \nabla f(\x), \y-\x \rangle \leq \Cf$.
\end{proof}

\begin{reptheorem}{thm:linear_convergenceMFW}
Suppose that $f$ has smoothness constant $\CfMFW$ as defined in~\eqref{eq:CfMFW}, as well as geometric strong convexity constant~$\strongConvMFW$ as defined in~(\ref{eq:muf}).
Then the error of the iterates of the FW algorithm with away-steps\footnote{%
In the algorithm, one can either use line-search or set the step-size as the feasible one that minimizes the quadratic upper bound given by the curvature $\Cf$%
, i.e. $\stepsize_k := \min\{1,\stepmax, \stepbound_k\}$ where $\stepbound_k := \frac{g_k}{2\CfMFW}$ and\vspace{-2mm} $g_k :=  \langle  -\nabla f(\x^{(k)}), \s_k - \vv_k \rangle$.
}
(Algorithm~\ref{alg:MFW})
decreases geometrically at each step that is not a drop step (i.e. when $\stepsize_k < \stepmax$), that is\vspace{-1mm}
\[
h_{k+1} \leq \left(1-\rho_f^\away\right) h_k \ ,\vspace{-1mm}
\]
where $\rho_f^\away := \frac{\strongConvMFW}{4\CfMFW}$. 
Moreover, the number of drop steps up to iteration $k$ is bounded by $k/2$. %
This yields the global linear convergence rate of $h_k \leq h_0 \exp(-\frac12 \rho_f^\away k)$.
\end{reptheorem}
\begin{proof}
The general idea of the proof is to use the definition of the geometric strong convexity constant to upper bound $h_k$, while using the definition of the curvature constant $\CfMFW$ to lower bound the decrease in primal suboptimality $h_k - h_{k+1}$ for the `good steps' of Algorithm~\ref{alg:MFW}. Then we upper bound the number of `bad steps' (the drop steps).

\emph{Upper bounding $h_k$.} In the whole proof, we assume that $\x^{(k)}$ is not already optimal, i.e. that $h_k > 0$. If $h_k = 0$, then because line-search is used, we will have $h_{k+1} \leq h_k = 0$ and so the geometric rate of decrease is trivially true in this case.\footnote{If the fixed schedule step-size is used, $h_k=0$ implies that $g_k=0$ and so $\stepsize_k = 0$ and thus $h_{k+1} = h_k$.} Let $\x^*$ be an optimum point (which is not necessarily unique). As $h_k > 0$, we have that $\innerProd{\nabla f(\x^{(k)})}{\x^*-\x^{(k)}} < 0$. We can thus apply the geometric strong convexity bound~\eqref{eq:muf} at the current iterate $\x:=\x^{(k)}$ using $\x^*$ as an optimum reference point to get (with $\overline{\stepsize} := \stepsize^\away(\x^{(k)}, \x^*)$):
\[
\begin{array}{rll}
\frac{{\overline{\stepsize}}^2}{2} \strongConvMFW \  \leq
&f(\x^*)-f(\x^{(k)}) -\left\langle \nabla f(\x^{(k)}) , \x^*-\x^{(k)} \right\rangle \\
=& -h_k - \overline{\stepsize} \left\langle  \nabla f(\x^{(k)}), \s_f(\x^{(k)}) - \vv_f(\x^{(k)})\right\rangle \\
\le &-h_k + \overline{\stepsize} \left\langle  \nabla f(\x^{(k)}), \s_k - \vv_k \right\rangle  \\
=&  -h_k + \overline{\stepsize}  g_k \ ,
\end{array}
\]
where we define $g_k :=  \left\langle  -\nabla f(\x^{(k)}), \s_k - \vv_k \right\rangle$ (note that $h_k \leq g_k$ and so $g_k$ also gives a primal suboptimality certificate). %
For the third line, we have used the definition of $\vv_f(\x)$ which implies $\left\langle  \nabla f(\x^{(k)}), \vv_f(\x^{(k)})\right\rangle \leq \left\langle  \nabla f(\x^{(k)}),\vv_k \right\rangle$. %
Therefore $h_k \le -\frac{{\overline{\stepsize}}^2}{2} \strongConvMFW + \overline{\stepsize} g_k$,
which is always upper bounded\footnote{Here we have used the trivial inequality $0 \le %
a^2-2ab+b^2$ for the choice of numbers $a:=\frac{g_k}{\strongConvMFW}$ and $b:=\overline{\stepsize}$.o} by $\frac{{g_k}^2}{2\strongConvMFW}$:\vspace{-2mm}
\begin{equation} \label{eq:hUpperBound}
h_k \leq  \frac{{g_k}^2}{2\strongConvMFW}.
\end{equation}

\emph{Lower bounding progress $h_k-h_{k+1}$.} A key aspect of the proof is to use the following observation: because of the way the direction $\dd_k$ is chosen in Algorithm~\ref{alg:MFW}, we have 
\begin{equation} \label{eq:gapDirection}
	\left\langle  -\nabla f(\x^{(k)}), \dd_k \right\rangle \geq g_k/2 , %
\end{equation}
and thus $g_k$ characterizes the quality of the direction $\dd_k$. To see this, note that $2 \left\langle  \nabla f(\x^{(k)}), \dd_k \right\rangle \leq  \left\langle \nabla f(\x^{(k)}), \dd_k^\FW\right\rangle  + \left\langle \nabla f(\x^{(k)}), \dd_k^\away\right\rangle = \left\langle \nabla f(\x^{(k)}), \dd_k^\FW+\dd_k^\away\right\rangle = -g_k$.

We first consider the case $\stepsize_\textrm{max} \geq 1$. Let $\x_\stepsize :=  \x^{(k)} + \stepsize \dd_k$ be the point obtained by moving with step-size $\stepsize$ in direction $\dd_k$, where $\dd_k$ is the one chosen by Algorithm~\ref{alg:MFW}. By using $\s := \x^{(k)} + \dd_k$ (a feasible point as $\stepsize_\textrm{max} \geq 1$)%
, $\x := \x^{(k)}$ and $\y := \x_\stepsize$ in the definition of the curvature constant~$\Cf$~\eqref{eq:Cf}, and solving for $f(\x_\stepsize)$, we get $f(\x_\stepsize) \leq f(\x^{(k)}) + \stepsize \left\langle  \nabla f(\x^{(k)}), \dd_k \right\rangle + \frac{\stepsize^2}{2} \Cf$, valid $\forall \stepsize \in [0,1]$. As~$\stepsize_k$ is obtained by line search and that $[0,1] \subseteq [0,\stepsize_\textrm{max}]$, we also have that $f(\x^{(k+1)}) = f(\x_{\stepsize_k}) \leq  f(\x_\stepsize)$ $\forall \stepsize \in [0,1]$. Combining these two inequalities, subtracting $f(\x^*)$ on both sides, and using $\Cf \leq \CfMFW$ to simplify the possibilities yields $h_{k+1} \le h_k + \stepsize \left\langle  \nabla f(\x^{(k)}), \dd_k \right\rangle + \frac{\stepsize^2}{2} \CfMFW$.

Using the crucial gap inequality~\eqref{eq:gapDirection}, we get $h_{k+1} \le h_k - \stepsize \frac{g_k}{2} + \frac{\stepsize^2}{2} \CfMFW$, and so:
\begin{equation} \label{eq:hProgress}
h_k - h_{k+1} \ge \stepsize \frac{g_k}{2} - \frac{\stepsize^2}{2} \CfMFW \quad \forall \stepsize \in [0,1].  
\end{equation}
We can minimize the bound~\eqref{eq:hProgress} on the right hand side by letting $\stepsize = \stepbound_k := \frac{g_k}{2\CfMFW}$ -- supposing that $\stepbound_k \leq 1$, we then get $h_k - h_{k+1} \geq  \frac{g_k^2}{8\CfMFW}$ (we cover the case $\stepbound_k > 1$ later).  By combining this inequality with the one from geometric strong convexity~\eqref{eq:hUpperBound}, we get 
\begin{equation} \label{eq:mainGeometric}
\frac{h_k - h_{k+1} }{h_k} \ge \frac{\strongConvMFW}{4\CfMFW} \vspace{-1mm}
\end{equation}
implying that we have a geometric rate of decrease $h_{k+1} \leq \Big(1-\frac{\strongConvMFW}{4\CfMFW}\Big) h_k$ (this is a `good step').

\emph{Boundary cases.} We now consider the case $\stepbound_k > 1$ (with $\stepsize_\textrm{max} \geq 1$ still). The condition $\stepbound_k > 1$ then translates to $g_k \geq 2 \CfMFW$, which we can use in~\eqref{eq:hProgress} with $\stepsize = 1$ to get $h_k - h_{k+1} \geq \frac{g_k}{2} - \frac{g_k}{4} = \frac{g_k}{4}$. Combining this inequality with $h_k \leq g_k$ gives the geometric decrease $h_{k+1} \leq \left(1-\frac{1}{4}\right) h_k$ (also a `good step'). $\rho_f^\away$ is obtained by considering the worst-case of the constants obtained from $\stepbound_k > 1$ and $\stepbound_k \leq 1$. (Note that always $\strongConvMFW \leq \CfMFW$ by definition, as discussed in Remark~\ref{rem:mfMFWsmallerThanCf}).

Finally, we are left with the case that $\stepsize_\textrm{max} < 1$. This is thus an away step and so $\dd_k = \dd_k^\away = \x^{(k)} - \vv_k$. Here, we use the away version $\CfA$ of the definition for $\CfMFW$: by letting $\s := \vv_k$, $\x := \x^{(k)}$ and $\y := \x_\stepsize$ in~\eqref{eq:CfA}, we also get the bound $f(\x_\stepsize) \leq f(\x^{(k)}) + \stepsize \left\langle  \nabla f(\x^{(k)}), \dd_k \right\rangle + \frac{\stepsize^2}{2} \CfMFW$, valid $\forall \stepsize \in [0,1]$ (but note here that the points $\x_\stepsize$ are not feasible for $\stepsize > \stepsize_\textrm{max}$ -- the bound considers some points outside of $\domain$). We now have two options: either $\stepsize_k = \stepsize_\textrm{max}$ (a drop step) or $\stepsize_k < \stepsize_\textrm{max}$. In the case $\stepsize_k < \stepsize_\textrm{max}$ (the line-search yields a solution in the interior of $[0,\stepsize_\textrm{max}]$), then because $f(\x_\stepsize)$ is convex in $\stepsize$, we know that $\min_{\stepsize \in [0,\stepsize_\textrm{max}]} f(\x_\stepsize) = \min_{\stepsize \geq 0} f(\x_\stepsize)$ and thus $\min_{\stepsize \in [0,\stepsize_\textrm{max}]} f(\x_\stepsize) = f(\x^{(k+1)}) \leq f(\x_\stepsize)$ $\forall \stepsize \in [0,1]$. We can then re-use the same argument above equation~\eqref{eq:hProgress} to get the inequality~\eqref{eq:hProgress}, and again considering both the case $\stepbound_k \leq 1$ (which yields inequality~\eqref{eq:mainGeometric}) and the case $\stepbound_k > 1$ (which yields $(1-\frac{1}{4})$ as the geometric rate constant), we get a `good step' with $1-\rho_f^\away$ as the worst-case geometric rate constant.

Finally, we can easily bound the number of drop steps possible up to iteration $k$ with the following argument (the drop steps are the `bad steps' for which we cannot show good progress). Let $A_k$ be the number of steps that added a vertex in the expansion (only standard FW steps can do this) and let $D_k$ be the number of drop steps. We have that $|\Coreset^{(k)}| = |\Coreset^{(0)}| + A_k - D_k$. Moreover, we have that $A_k+D_k \leq k$. We thus have $1 \leq |\Coreset^{(k)}| \leq |\Coreset^{(0)}| + k - 2D_k$, implying that $D_k \leq \frac{1}{2}( |\Coreset^{(0)}|-1+k)=\frac{k}{2}$, as stated in the theorem.
\end{proof}

\remove{
\section{Linear Convergence of FW under Strong Convexity}

We can re-use the proof techniques for the linear convergence of MFW to get an affine invariant analysis of the linear convergence of the standard FW algorithm when $f$ is strongly convex and the solution $\x^*$ lies in the relative interior of $\domain$ (an improvement over~\cite{Guelat:1986fq}).
The standard FW algorithm is like algorithm~\ref{alg:FW} where only the standard FW steps are used. As only the direction $\dd_k^\FW = \s_k - \x^{(k)}$ is used during the algorithm, we need to replace the anchor direction $\s_f(\x) - \vv_f(\x)$ in~\eqref{eq:muf} with $\s_f(\x)-\x$. Unfortunately, trying to be agnostic about the position of the optimum $\x^*$ in $\domain$ by taking the infimum over all possible $\x^*$ as was done in~\eqref{eq:muf} will yield zero in this case. We thus define the constant $\strongConvFW$ as a function of the position of the optimum $\x^*$ to get the following affine invariant quantity:
\begin{equation}\label{eq:mufFW-again}
  \strongConvFW :=  \inf_{\x\in \domain \setminus \{\x^*\}} 
             2 \left( \frac{\left\langle \nabla f(\x), \s_f(\x) - \x) \right\rangle }{\left\langle \nabla f(\x), \x^*-\x \right\rangle}  \right)^2 
             \big( f(\x^*)-f(\x)-\left\langle \nabla f(\x),  \x^*-\x \right\rangle \big) \ .
\end{equation}
Note that $\x^*$ is assumed to be an optimal solution and is unique by strong convexity, thus $\left\langle \nabla f(\x), \x^*-\x \right\rangle < 0$ for all $\x\in \domain \setminus \{\x^*\}$.

\begin{lemma}
Let $f$ be a convex differentiable function and suppose $f$ is \emph{strongly convex} w.r.t. some \note{inner product} norm $\norm{.}$ over the domain $\domain$ with strong-convexity constant $\mu>0$.

Furthermore, suppose that the (unique) optimum $x^*$ lies in the relative interior of $\domain$, i.e. $\distToBoundary := \inf_{\s\in\partial \domain} \norm{\s-\x^*} > 0$.
Then\vspace{-3mm}
\[
\strongConvFW \geq \mu \cdot \distToBoundary^2  \ .
\] %
\end{lemma}
\begin{proof}
Using the strong convexity bound~\eqref{eq:strongConv} with $\y := \x^*$ on the right hand side of equation~\eqref{eq:mufFW} (and using the shorthand $\r_{\x} := -\nabla f(\x)$ ), we get:
\begin{align} \label{eq:strongConvFW}
\strongConvFW \geq&   \inf_{\x\in \domain \setminus \{\x^*\}}
                      \mu \left(  \frac{\innerProd{\r_{\x}}{ \s_f(\x) - \x}}{\innerProd{\r_{\x}}{\x^*-\x}} \norm{\x^*-\x} \right)^2 \nonumber \\
		\geq&  \inf_{\x\in \domain \setminus \{\x^*\}}
		                      \mu \left(  \innerProd{\frac{\r_{\x}}{\norm{\r_{\x}}}}{ \s_f(\x) - \x} \right)^2 , \nonumber 
\end{align}
where in the second line, we have used the Cauchy-Schwartz inequality: $\innerProd{\r_{\x}}{\frac{\x^*-\x}{\norm{\x^*-\x}}} \leq \norm{\r_{\x}} \cdot 1$. Now, by definition of $\s_f(\x)$, we have $\innerProd{\r_{\x}}{ \s_f(\x) - \x} \geq \innerProd{\r_{\x}}{ \y - \x}$ for all $\y \in \domain$. By using the particular $y := \x + \distToBoundary \frac{\r_{\x}}{\norm{\r_{\x}}}$ which is feasible by definition of $\distToBoundary$ 
as then $\norm{\y-\x}=\distToBoundary$, then we get $\innerProd{\r_{\x}}{ \s_f(\x) - \x} \geq  \innerProd{\r_{\x}}{ \y - \x} = \distToBoundary \norm{\r_{\x}}$, and so we have:
\begin{align*}
\strongConvMFW \geq&   \inf_{\x\in \domain \setminus \{\x^*\}} 
                      \mu \left(  \distToBoundary \right)^2 = \mu \left(  \distToBoundary \right)^2,
\end{align*}
which is the claimed bound.
\end{proof}

\paragraph{Linear Convergence Proof}

\begin{theorem}[Linear Convergence of Frank-Wolfe for Strongly Convex Functions]\label{thm:linear_convergence_FW}
Suppose that $f$ has smoothness constant $\Cf$ as defined in~(\ref{eq:Cf}),
as well as ``interior'' strong convexity constant~$\strongConvFW > 0$ as defined in~\eqref{eq:mufFW}.

Then the error of the iterates of the standard Frank-Wolfe algorithm with step-size $\stepsize := \min\{1, \frac{g_k}{\Cf} \}$ (or using line-search) decreases geometrically, that is
\[
h_{k+1} \leq \left(1-\rho_f^\FW\right) h_k \ , 
\]
where $\rho_f^\FW := \min \{\frac{1}{2}, \frac{\strongConvFW}{\Cf} \}$.
Here in each iteration, $h_k := f(\x^{(k)}) - f(\x^*)$ denotes the primal error, and $g_k :=  g(\x^{(k)}) = \displaystyle\max_{\s \in \domain} \,\big\langle \x^{(k)} - \s, \nabla f(\x^{(k)}) \big\rangle$ is the duality gap as defined in (\ref{eq:defGap}).
\end{theorem}
\begin{proof}
We follow a very similar argument as in the proof of Theorem~\ref{thm:linear_convergenceMFW} (excluding the away steps). In particular, we can redo the same argument to upper bound $h_k$ but using the different $\overline\stepsize := \frac{\innerProd{\r_{\x}}{\x^*-\x} }{\innerProd{\r_{\x}}{\s_f(\x) - \x} }$ to get analogously to~\eqref{eq:hUpperBound} $h_k \leq  \frac{{g_k}^2}{2\strongConvFW}$, but with  $g_k :=  \innerProd{-\nabla f(\x^{(k)})}{\s_k - x}$. To lower bound the progress $h_k - h_{k+1}$, we use here that $\innerProd{-\nabla f(\x^{(k)})}{\dd_k} = g_k$ (instead of~\eqref{eq:gapDirection}) and the argument after~\eqref{eq:gapDirection} to conclude the following analog to~\eqref{eq:hProgress}:
\begin{equation} \label{eq:hProgressFW}
h_k - h_{k+1} \ge \stepsize g_k - \frac{\stepsize^2}{2} \Cf \quad \forall \stepsize \in [0,1].  
\end{equation}
We again minimize this bound on the right hand size by letting $\stepsize = \stepbound_k := \frac{g_k}{\Cf}$ (supposing that $\stepbound_k \leq 1$), getting $h_k - h_{k+1} \geq  \frac{g_k^2}{2\Cf}$. By combining this inequality with the upper bound on $h_k$, we get 
$$
\frac{h_k - h_{k+1} }{h_k} \ge \frac{\strongConvFW}{\Cf}
$$
implying that we have a geometric rate of decrease $h_{k+1} \leq \left(1-\frac{\strongConvFW}{\Cf}\right) h_k$.

If $\stepbound_k > 1$, we get $g_k > \Cf$ and so using $\stepsize = 1$ in~\eqref{eq:hProgressFW} yields $h_k - h_{k+1} \geq g_k/2$. Combining with $h_k \leq g_k$ yields $h_{k+1} \leq (1-\frac{1}{2}) h_k$, giving the other part of the definition of $\rho_f^\FW$ and completing the proof.
\end{proof}

} %

\remove{
\todo{Fix the MDM part!}
\section{[Draft] Analysing The pairwise/MDM Algorithm}
\paragraph{The MDM Algorithm.}
The classical MDM algorithm \cite{Mitchell:1974uy} was proposed for the polytope distance problem (or minimum norm problem).
\cite{Lopez:2012ha} has recently proven a linear convergence rate for that case (with some caveats though, only for the distance problem and only asymptotically).
The MDM algorithm can simply be generalized to the general setting of constrained optimization~(\ref{eq:optGenConvex}), as follows:

\paragraph{The Case of Simplex Domain.}
Updates: $\x^{(k+1)} := \x^{(k)} + \stepsize (\unit_{good}-\unit_{bad})$, with the step-size~$\stepsize$ being best for the available (non-zero) weight $\x_{bad}$.

\subsection{Linear Convergence of the MDM Algorithm}
\paragraph{Directional Width.}
The directional width of a set $\domain$ with respect to a direction $\vv$ is defined as $\dirW(\domain,\vv) := \max_{\x\in\domain} \left\langle\x,\frac{\vv}{\norm{\vv}}\right\rangle-\min_{\x\in\domain} \left\langle\x,\frac{\vv}{\norm{\vv}}\right\rangle$.
We define $\dirW(\domain) := \inf_{\vv} \dirW(\domain,\vv)$.

\paragraph{A larger Definition of a Duality Gap.}
$g_k :=  g(\x^{(k)}) = \displaystyle\max_{\underline\s,\overline\s \in \domain} \,\big\langle \overline\s - \underline\s, \nabla f(\x^{(k)}) \big\rangle$
(recall that the old definition was $g_k^{old} := \displaystyle\max_{\s \in \domain} \,\big\langle \x^{(k)} - \s, \nabla f(\x^{(k)}) \big\rangle$) so that $g_k \ge g_k^{old}$ always holds.

\begin{theorem}[Linear Convergence of MDM for Strongly Convex Functions]\label{thm:linear_convergence_MDM}
Let the objective $f$ be $L$-smooth and $\mu$-strongly convex, and define $\Cf := L \diam(\domain)^2$ as well as $\strongConvMFW := \mu \dirW(\domain)^2$.

Then the error of the iterates of the MDM algorithm (with line-search) decreases geometrically, that is
\[
h_{k+1} \leq \left(1-\frac{\strongConvMFW}{\Cf}\right) h_k \ .
\]
\end{theorem}
\begin{proof}
Let us define the update-direction pair $\dd := \unit_{good}-\unit_{bad}$, as given by the linear problem defined by the current gradient $\nabla := \nabla f(\x^{(k)})$.

\textbf{Improvement bound from the descent:}
\[
\begin{array}{rl}
h_{k+1} \le& h_k + \stepsize \langle \dd,\nabla \rangle + \stepsize^2 L \norm{\dd}^2 \\
=& h_k - \stepsize g_k + \frac{\stepsize^2}{2} \Cf \ .
\end{array}
\]

\textbf{Error-bound from strong convexity:}
\[
\begin{array}{rl}
\frac{\mu}{2} \norm{\overline\stepsize \overline\dd}^2
\le&
f(\x^*)-f(\x) - \overline\stepsize \left\langle \overline\dd,\nabla \right\rangle \\
\le &-h_k + \overline\stepsize  g_k \ .
\end{array}
\]
where the vector $\overline \dd := \frac{1}{\overline \stepsize} (\x^*-\x)$ with its rescaling value $\overline \stepsize>0$ is defined such that $\overline \dd$ attains the directional width $\dirW(\domain,\x^*-\x)$.

In the last inequality we have used the small Lemma $\left\langle \overline \dd,-\nabla \right\rangle \le g_k$ (\textbf{TODO: verify}).\\

On the left hand side we can now lower-bound by the directional width, $\norm{\overline \dd}^2 \ge \dirW(\domain)^2$. 

Using the notation $\strongConvMFW$, the final inequality therefore reads as $h_k \le -\frac{\overline\stepsize^2}{2} \strongConvMFW
+ \overline\stepsize  g_k$, which is upper bounded by $\frac{{g_k}^2}{2\strongConvMFW}$.
\\
{\small(Here we have used the trivial inequality $0 \le %
a^2-2ab+b^2$ for the choice of numbers $a:=\frac{g_k}{\strongConvMFW}$ and $b:=\overline\stepsize$)}

\textbf{Geometric decrease:} Combining the above two derived bounds,
$h_{k+1} \le h_k - \stepsize g_k + \frac{\stepsize^2}{2} \Cf$%
, so $h_k - h_{k+1} \ge \stepsize g_k - \frac{\stepsize^2}{2} \Cf$, 
(and using the choice of step-size $\stepsize:=\frac{g_k}{\Cf}$ so that this is $= \frac{g_k^2}{2\Cf}$) %
we therefore get
\[
\frac{h_k - h_{k+1} }{h_k} \ge \frac{\strongConvMFW}{\Cf}
\]
implying that we have a geometric rate of decrease $h_{k+1} \leq \left(1-\frac{\strongConvMFW}{\Cf}\right) h_k$.
\end{proof}

\paragraph{Some TODOs:}
\begin{itemize}
\item Proof (or fix) the gap lemma (and define gap only w.r.t. non-zero weight bad vertices!)
\item Deal with small weights $\x_{bad}$.
\item Generalize everything from simplex to arbitrary convex sets $\domain$ (one easy way could be by using barycentric representation)
\item We could probably get the convergence of the old SMO algorithm for SVMs as a corollary (there are some papers already that tried to analyze it in some ad-hoc way).
\end{itemize}
}

\end{document}